\newtheorem{theorem}{Theorem}[section]
\newtheorem{lemma}[theorem]{Lemma}
\newtheorem{corollary}[theorem]{Corollary}
\newtheorem{definition}[theorem]{Definition}
\newtheorem{remark}[theorem]{Remark}
\numberwithin{equation}{section}
\newenvironment{proof}[1][Proof]{\noindent\textbf{#1.} }{\hfill $\Box$}
 \makeatletter\setlength{\textwidth}{16.50cm}
\begin{document}
\title{{Space-time derivative estimates of the Kock-Tataru solutions to the nematic liquid crystal system in Besov spaces}
\thanks{The author is supported by the Hunan Provincial Natural Science Foundation of China
(13JJ4043) and National Natural Science Foundation of China
(11326155, 11171357).} }
\author{{\small   Qiao Liu \thanks{\text{E-mail address}: liuqao2005@163.com.
}}
\\
{\small Department of Mathematics, Hunan Normal University, Changsha, Hunan 410081, P. R. China}\\
{ June 13, 2014}\\
}
\date{}
\maketitle

\begin{abstract}
In  recent paper \cite{DW1} (Y. Du and K. Wang, Space-time
regularity of the Kock $\&$ Tataru solutions to the liquid crystal
equations, SIAM J. Math. Anal., \textbf{45}(6), 3838--3853.), the
authors proved that the global-in-time Koch-Tataru type solution
$(u,d)$ to the $n$-dimensional incompressible nematic liquid crystal
flow with small initial data $(u_{0},d_{0})$ in $BMO^{-1}\times BMO$
has arbitrary space-time derivative estimates in the so called
Koch-Tataru space norms. The purpose of this paper is to show that
the Koch-Tataru type solution satisfies the decay estimates for any
space-time derivative involving some borderline Besov space norms.
More precisely, for the global-in-time Koch-Tataru type solution
$(u,d)$ to the nematic liquid crystal flow with initial data
$(u_{0},d_{0})\in BMO^{-1}\times BMO$ and
$\|u_{0}\|_{BMO^{-1}}+[d_{0}]_{ BMO}\leq \varepsilon$ for some small
enough $\varepsilon>0$, and for any positive integers $k$ and $m$,
one has
\begin{align*}
\|t^{\frac{k}{2}+m}(\partial^{k}_{t}\nabla^{m} u,
\partial^{k}_{t}\nabla^{m} \nabla
d)\|_{\widetilde{L}^{\infty}(\mathbb{R}_{+},\dot{B}^{-1}_{\infty,\infty})\cap
\widetilde{L}^{1}(\mathbb{R}_{+};\dot{B}^{1}_{\infty,\infty})}\leq
\varepsilon.
\end{align*}
Furthermore, we shall give that the solution admits  an unique
trajectory which is H\"{o}lder continuous with respect to space
variables.

\medskip

\textbf{Keywords}: nematic liquid crystal flow; Navier-Stokes
equations; regularity; Littlewood-Paley decomposition; Besov space;
tragectory

\textbf{2010 AMS Subject Classification}: 76A15, 35B65, 35Q35
\end{abstract}

\section{Introduction}\label{Int}

\noindent

Liquid crystal refer to a state of matter that has properties
between those of a solid crystal and those of an isotropic liquid.
Examples of liquid crystals can be found both in nature (e.g.,
solutions of soap and detergents) and in technological applications
(e.g., electronic displays). There are many different types of
liquid crystal phases, 
and the most common phase is the nematic. In a nematic phase, the
molecules do not exhibit any positional order, but they have
long-range orientational order.
 The hydrodynamic theory of the nematic liquid crystals,
due to Ericksen and Leslie, was developed during the period of 1958
through 1968 (see \cite{ER,LE}). Since then, many remarkable
developments have been made from both theoretical and applied
aspects.

 For any $n\geq 2$, the hydrodynamic flow of nematic liquid crystals
 in $\mathbb{R}^{n+1}_{+}:= \mathbb{R}^{n}\times\mathbb{R}_{+}$ is
 given by
\begin{align}
   \label{eq1.1}
&{\partial_{t}}u-\nu\Delta u +(u\cdot\nabla)u+\nabla{P}=-\lambda\nabla\cdot(\nabla d \odot\nabla d)\quad\text{ in }\mathbb{R}^{n+1}_{+},\\
   \label{eq1.2}
&\partial_{t}d+(u\cdot\nabla)d=\gamma(\Delta d+|\nabla d|^{2}d)\quad\quad
  \quad\quad\quad\quad\quad\quad\text{ in }\mathbb{R}^{n+1}_{+},\\
   \label{eq1.3}
&\nabla\cdot u=0,\quad|d|=1  \qquad\qquad\quad\quad
\quad\quad\quad\quad\quad\quad\quad\quad\text{ in }\mathbb{R}^{n+1}_{+},\\
   \label{eq1.4}
&(u,d)|_{t=0}=(u_{0},d_{0}),\quad|d_{0}(x)|=1
\quad\quad\quad\quad\quad\quad\quad\quad \text{in }\mathbb{R}^{n},
\end{align}
where $u(x,t):\mathbb{R}^{n+1}_{+}\rightarrow \mathbb{R}^{n}$ is the
unknown velocity field of the flow,
$P(x,t):\mathbb{R}^{n+1}_{+}\rightarrow \mathbb{R}$ is the scalar
pressure and $d:\mathbb{R}^{n+1}_{+}\rightarrow \mathbb{S}^{2}$, the
unit sphere in $\mathbb{R}^{3}$, is the unknown (averaged)
macroscopic/continuum molecule orientation of the nematic liquid
crystal flow, $\nabla \cdot u=0$ represents the incompressible
condition, $(u_{0},d_{0})$ is a given initial data with $\nabla
\cdot u_{0}=0$ in distribution sense, and  $\nu$, $\lambda$ and
$\gamma$ are positive numbers associated to the properties of the
material: $\nu$ is the kinematic viscosity, $\lambda$ is the
competition between kinetic energy and potential energy, and
$\gamma$ is the microscopic elastic relaxation time for the
molecular orientation field. The notation $\nabla d\odot\nabla
d=\left(\partial_{i}d\cdot
\partial_{j}d\right)_{1\leq i,j\leq n}$ is the stress tensor induced
by the director field $d$.  Since the concrete values of $\nu$,
$\lambda$ and $\gamma$ do not play a special role in our discussion,
for simplicity, we assume that they all equal to one throughout this
paper.

System \eqref{eq1.1}--\eqref{eq1.4} is a simplified version of the
Ericksen-Leslie model \cite{ER,LE}, but it still retains most
important  mathematical structures as well as most of the essential
difficulties of the original Ericksen-Leslie model. A brief account
of the Ericksen-Leslie theory on nematic liquid crystal flows and
the derivations of several approximate systems can be found in the
appendix of \cite{LL1}. For more details on the dynamic continuum
theory of liquid crystals, we refer the readers to the book of
Stewart \cite{IWS}. Mathematically, system
\eqref{eq1.1}--\eqref{eq1.4} is a strongly coupled system between
the incompressible Navier-Stokes (NS) equations (the case $d\equiv
1$, see e.g.,
\cite{C1,C2,M,DD,DL,D,FK,GPS,YO,YG,TK,KF,HD,PG,Leray,HO,OS,ZZ}) and
the transported heat flows of harmonic map (the case $u\equiv 0$,
see e.g., \cite{MS,W}), and thus, its mathematical analysis is full
of challenges.

To make a clearer introduction to the results of the present paper,
we shall recall some well-posedness and regularity results of the NS
equations. In the seminal paper \cite{Leray},  Leray proved the
global existence of finite energy weak solutions to
\eqref{eq1.1}--\eqref{eq1.3}, but its regularity and uniqueness
still remain open. The theory of the so called \textit{mild}
solutions to the NS equations is pioneered by Fujita and Kato
\cite{FK,KF}, and these works inspired extensive study in the
following years on the well-posedness of the NS equations in various
critical spaces, see Kato \cite{TK}, Cannone \cite{M}, Koch and
Tataru \cite{HD}, Lemari\'{e}-Rieusset \cite{PG} and so on.
Particularly, Koch and Tataru established the well-posedness for the
NS equations  with initial data in $BMO^{-1}(\mathbb{R}^{n})$.
Hereafter, we call the \textit{mild} solution presented by Koch and
Tataru \cite{HD} as Koch-Tataru solution. For the spatial regularity
on the \textit{mild} solution to the NS equations has been studied
by many authors, such as  Giga and Sawada \cite{YO}, Sawada
\cite{OS} and Miura and Sawada \cite{HO}. In paper \cite{GPS},
Germain, Pavlovi\'{c} and Staffilani had proved the Koch-Tataru's
solution $u$ satisfies the following spatial regularity property:
\begin{align*}
t^{\frac{m}{2}}\nabla^{m} u\in Z_{T^{*}} \text{ for all }
m\in\mathbb{N},
\end{align*}
where $Z_{T^{*}}$ is the Koch-Tataru's solution existence space for
the NS equations. In \cite{ZZ}, the authors generalized
Germain-Pavlovi\'{c}-Staffilani's results, and obtained that for
sufficient small enough $u_{0}\in BMO^{-1}$, the global-in-time
Koch-Tataru's solution satisfies
\begin{align*}
\|t^{\frac{m}{2}}\nabla^{m}u\|_{\widetilde{L}^{\infty}(\mathbb{R}_{+};\dot{B}^{-1}_{\infty,\infty})\cap
\widetilde{L}^{1}(\mathbb{R}_{+};\dot{B}^{1}_{\infty,\infty})}\leq
C\|u_{0}\|_{BMO^{-1}}(1+\|u_{0}\|_{BMO^{-1}})\text{ for all }
m\in\mathbb{N}.
\end{align*}
As to the space-time regularity of the \textit{mild} solutions to
the NS equations, when $u_{0}\in L^{n}(\mathbb{R}^{n})$, in
\cite{DD}, Dong and Du established the result
\begin{align*}
\|t^{\frac{m}{2}+k}\partial_{t}^{k}\nabla^{m}
u\|_{L^{n+2}(\mathbb{R}^{n}\times (0,T^{*}))}\leq C \text{ for all }
k,m\in \mathbb{N},
\end{align*}
where $T^{*}$ is the maximum existence time. Vary recently, inspired
by the results of \cite{DD} and \cite{GPS}, Du in \cite{D} proved
that the Koch-Tataru's solution is space-time regularity. More
precisely, the author established that there holds
\begin{align*}
t^{\frac{m}{2}+k}\partial_{t}^{k}\nabla^{m} u\in Z_{T^{*}} \text{
for all } k, m\in\mathbb{N},
\end{align*}
with the initial data $u_{0}\in BMO^{-1}$. On the other hand, with
suitable regularities for the solution $u$ to the NS equations,
Chemin \cite{C1,C2} proved that the existence and uniqueness of the
trajectory to $u$, moreover, this trajectory is H\"{o}lder
continuous with respect to the space variables.

 During the past several decades, there have been many attempts on
 rigorous mathematical analysis on the nematic liquid crystal flows, see, for example,
\cite{DW1,DW2,JHW,HMC,HW,HW1,XLW,L,LLW,LL1,LL2,LW,LD,LNW,LQ,SL,W,WD,XZ}
and the references therein. If $|\nabla d|^{2} d$ in \eqref{eq1.2}
is replaced by $\frac{(1-|d|^{2})d}{\varepsilon}$ ($\varepsilon$ is
a positive parameter), thus  the Dirichlet energy
\begin{align*}
\frac{1}{2}\int_{\mathbb{R}^{n}}|\nabla d|^{2}\text{d}x\quad\text{
for  } d:\mathbb{R}^{n}\rightarrow \mathbb{S}^{2}
\end{align*}
 is replaced by the Ginzburg-Landau energy
\begin{align*}
\int_{\mathbb{R}^{n}}\left(\frac{1}{2}|\nabla
d|^{2}+\frac{(1-|d|^{2})^{2}}{4\varepsilon^{2}}\right)\text{d}x
\quad \text{ for } d:\mathbb{R}^{n}\rightarrow \mathbb{R}^{n},
\varepsilon>0.
\end{align*}
In this case, the system has been studied by a series of papers by
Lin \cite{L} and Lin and Liu \cite{LL1,LL2}. More precisely, they
proved in \cite{LL1} the local classical solutions and the global
existence of weak solutions in dimensions two and three, and for any
fixed $\varepsilon$, they also obtained the existence and uniqueness
of global classical solution either in dimension two or dimension
three for large fluid viscosity $\nu$. However, as the authors
pointed out in \cite{LL1}, it is a challenging problem to study the
limiting case as $\varepsilon$ tends to zero. Later, in \cite{LL2},
they proved partial regularity of weak solutions in dimension three.
Hu and Wang \cite{HW} established the existence of global strong
solution in suitable regular space and proved that all weak
solutions constructed in \cite{LL1} must be equal to the unique
strong solution. Compared with these results, the studies for system
\eqref{eq1.1}--\eqref{eq1.4} were only started in recent years. In
papers Lin et al. \cite{LLW} and Hong \cite{HMC}, the authors proved
that there exists global Leray-Hopf type weak solutions to
\eqref{eq1.1}-\eqref{eq1.4} with suitable boundary condition in
dimension two, and established that the solutions are smooth away
from at most finitely many singular times which is similar as that
for the heat flows of harmonic maps (see \cite{MS}).  The uniqueness
of such weak solutions was subsequently obtained by Lin and Wang
\cite{LW} and Xu and Zhang \cite{XZ}.  the existence of
local-in-time strong solution with large initial value and
global-in-time strong solution with  small initial value of system
\eqref{eq1.1}--\eqref{eq1.4} were also considered by many authors,
we refer the readers to see \cite{XLW,WD,LD,LQ,JHW} and the
references cited therein.
Recently, Wang in \cite{W} used the framework of
Koch and Tataru \cite{HD}  to proved that if the initial data
$(u_{0},d_{0})\in BMO^{-1}\times BMO$ with small norm, then system
\eqref{eq1.1}--\eqref{eq1.4} exists a global-in time Koch-Tataru
type solution. For the regularity issues of solutions to system
\eqref{eq1.1}--\eqref{eq1.4}, in \cite{DW1,DW2},  Du and Wang used
the frameworks of the Germain, Pavlovi\'{c} and Staffilani
\cite{GPS} and the Dong and Du \cite{DD} to study the rgularity of
the Koch-Tataru type solutions to \eqref{eq1.1}--\eqref{eq1.4}, and
proved that the solution obtained in \cite{W} has arbitrary spatial
and temporal regularity

Since the nematic liquid crystal flows \eqref{eq1.1}--\eqref{eq1.4}
is a strong coupling system between the incompressible NS equations
and the transport heat flow of harmonic maps, there are some similar
properties between the NS eqations and the nematic liquid crystal
flows.  In fact, when researchers studied system
\eqref{eq1.1}--\eqref{eq1.4}, the solutions to system
\eqref{eq1.1}--\eqref{eq1.4} share the similar scaling properties of
solutions to the Navier--Stokes equations. That is, if
$(u(x,t),d(x,t),P(x,t))$ solves \eqref{eq1.1}--\eqref{eq1.4}, then
\begin{align*}
(u_{\lambda}(x,t),d_{\lambda}(x,t),P_{\lambda}(x,t)):=(\lambda
u(\lambda x,\lambda^{2} t),d(\lambda x,\lambda^{2}t),
\lambda^{2}P(\lambda x, \lambda^{2}t))
\end{align*}
for any $\lambda>0$ is also a solution of
\eqref{eq1.1}--\eqref{eq1.3} with the initial data
$(u_{0\lambda}(x),d_{0\lambda}(x)):=(\lambda u_{0}(\lambda
x),d_{0}(\lambda x))$. These  useful properties for the system
\eqref{eq1.1}--\eqref{eq1.4} lead to the following definition. A
function space $(X,Y)$ called a critical space for
\eqref{eq1.1}--\eqref{eq1.4} if it is invariant under the scaling
\begin{align*}
(f_{\lambda}(x),g_{\lambda}(x)):=(\lambda f(\lambda x), g(\lambda
x)), \text{ for all } (f,g)\in X\times Y.
\end{align*}
It is easy to varify that the spaces $L^{n}(\mathbb{R}^{n})\times
\dot{W}^{1,n}(\mathbb{R}^{n})$ and $BMO^{-1}\times BMO$ are critical
spaces for system \eqref{eq1.1}--\eqref{eq1.4}. We also notice that
$BMO^{-1}\times BMO$ may be regard as  the largest critical space
for initial data, where such well-posedness  of  system
\eqref{eq1.1}--\eqref{eq1.4} can be constructed.

Motivated by the works of Koch and Tatatu \cite{HD}, Germain,
Pavlovi\'{c} and Staffilani \cite{GPS}, Dong and Du \cite{DD} and
Zhang et al. \cite{ZZ} on the NS equations, and by the works of  Du
and Wang \cite{DW1,DW2}, and Wang \cite{W} on the nematic liquid
crystal flows,
 in the present paper, we shall consider the
decay estimates for any space-time derivative of the Koch-Tataru
type solution involving some borderline Besov space norms. As a
corollary, we also present the corresponding decay estimate in time.
Though this work is partially enlighted by the paper \cite{ZZ} of
Zhang et al., who considered the regularity of Koch-Tataru solution
to the NS equations, we have a more sophisticated system to estimate
due to the coupling between the velocity field $u$ and the
orientation $d$. Moreover, we overcome the difficulties caused by
the operator $\partial_{t}$ in our proofs. On the other hand,
inspired by papers \cite{C1,C2} of Chemin, we shall study the
trajectories of the Koch-Tataru type solution to
\eqref{eq1.1}--\eqref{eq1.4}. In order to state our main results, we
first recall the definition of the spaces $BMO$, $BMO^{-1}$ and the
existence space of Koch-Tataru type solutions to the nematic liquid
crystal flows, for more details about these space, we refer to
\cite{DW1,DW2,HD,PG,W,ZZ} and the references therein.

\begin{definition} \label{def1.1}
\ 1. Let $W$ be the solution of $W_{t}-\Delta W=0$ with initial data
$f$. Denote
\begin{align*}
&[f]_{BMO}:=\sup_{0<r\leq +\infty; x\in \mathbb{R}^{n}} \left(r^{-n}
\int_{0}^{r^{2}}\int_{|y-x|<r} |\nabla W|^{2}
\text{d}y\text{d}t\right)^{\frac{1}{2}};\nonumber\\
&\|f\|_{BMO^{-1}}:=\sup_{0<r\leq +\infty; x\in \mathbb{R}^{n}}
\left(r^{-n} \int_{0}^{r^{2}}\int_{|y-x|<r} | W|^{2}
\text{d}y\text{d}t\right)^{\frac{1}{2}}.
\end{align*}
We say a function $f\in L^{1}_{loc}(\mathbb{R}^{n})$ is in
$BMO(\mathbb{R}^{n})$ if the semi-norm $[f]_{BMO(\mathbb{R}^{n})}$
is finite, and we say $f$ is in $BMO^{-1}(\mathbb{R}^{n})$ if the
norm $\|f\|_{BMO^{-1}}$ is finite. Clearly the divergence of a
vector field with components in $BMO(\mathbb{R}^{n})$ is in
$BMO^{-1}(\mathbb{R}^{n})$, i.e., $\nabla \cdot
(BMO(\mathbb{R}^{n}))^{n}=BMO^{-1}(\mathbb{R}^{n})$.
\medskip

\ 2.  We say  a function $f$ defined on $\mathbb{R}^{n+1}_{+}$
belongs to the space $X(\mathbb{R}^{n})$ provided
\begin{align*}
\||f|\|_{X}: &\triangleq \sup_{0<t\leq \infty}
\|f(\cdot,t)\|_{L^{\infty}} +\|f\|_{X}<+\infty,
\end{align*}
 with
\begin{align*}
\|f\|_{X}:&\triangleq\sup_{0<t\leq +\infty}\sqrt{t}\|\nabla
f(\cdot,t)\|_{L^{\infty}}+\sup_{x\in\mathbb{R}^{n},0<r^{2}\leq
+\infty}\left(r^{-n}\int_{0}^{r^{2}}\int_{|y-x|<r} |\nabla
f(y,t)|^{2}\text{d}y\text{d}t\right)^{\frac{1}{2}}.
\end{align*}
We say  a function $g$ defined on $\mathbb{R}^{n+1}_{+}$ belongs to
the space $Z(\mathbb{R}^{n})$ provided
\begin{align*}
\|g\|_{Z}:&\triangleq\sup_{0<t\leq
+\infty}\sqrt{t}\|g(\cdot,t)\|_{L^{\infty}}+\sup_{x\in\mathbb{R}^{n},0<r^{2}\leq
+\infty}\left(r^{-n}\int_{0}^{r^{2}}\int_{|y-x|<r}
|g(y,t)|^{2}\text{d}y\text{d}t\right)^{\frac{1}{2}}<\infty.
\end{align*}
\end{definition}

It is easy to see that  $(X,\||\cdot|\|_{X})$ and
$(Z,\|\cdot\|_{Z})$ are two Banach space, and we say $Z\times X$ is
the Koch-Tataru existence space for the nematic liquid crystal flow
\eqref{eq1.1}--\eqref{eq1.4}. We also need to recall the following
existence and regularity results for the nematic liquid crystal
flows with initial data $(u_{0},d_{0})$ in $BMO^{-1}\times BMO$,
which was proved by Wang \cite{W}, and Du and Wang \cite{DW1,DW2}.

\begin{theorem}\label{thm1.2}
(see  Theorems 1.6 of \cite{W} and Theorem 1.5 of \cite{DW1,DW2})
There exists $\varepsilon=\varepsilon(n)>0$ small enough, and
$C_{0}>0$, such that if $u_{0}\in BMO^{-1}$ with $\nabla\cdot
u_{0}=0 $, and $d_{0}\in BMO$ satisfies
\begin{align*}
\|u_{0}\|_{BMO^{-1}}+[d_{0}]_{BMO}\leq \varepsilon,
\end{align*}
then there exists a unique global-in-time solution $(u,d)\in Z\times
X$ to the nematic liquid crystal flow  \eqref{eq1.1}--\eqref{eq1.4}
so that
\begin{align*}
\|u\|_{Z}+\|d\|_{X} \leq C_{0}\varepsilon.
\end{align*}
 Moreover, for any integers $k,m\geq 0$, there exists a positive constant
$C_{k,m}$,  such that  the unique global-in-time solution $(u,d)$
satisfies
\begin{align}\label{eq1.5}
\|t^{k+\frac{m}{2}}\partial_{t}^{k}\nabla^{m}
u\|_{Z}+\|t^{k+\frac{m}{2}}\partial_{t}^{k}\nabla^{m} d\|_{X}\leq
C_{k,m} \varepsilon.
\end{align}
\end{theorem}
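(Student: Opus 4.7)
The plan is to split the argument into two pieces: first, existence, uniqueness, and the base bound $\|u\|_{Z}+\|d\|_{X}\leq C_{0}\varepsilon$ via a contraction argument in the Koch--Tataru space $Z\times X$; second, the weighted derivative estimates \eqref{eq1.5} by induction on $N=k+m$, reducing time derivatives to spatial ones through the PDE itself.

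For the existence part I would rewrite \eqref{eq1.1}--\eqref{eq1.4} in its mild (Duhamel) form
\begin{align*}
u(t) &= e^{t\Delta} u_{0} - \int_{0}^{t} e^{(t-s)\Delta}\,\mathbb{P}\,\nabla\cdot\bigl(u\otimes u + \nabla d\odot\nabla d\bigr)\,ds, \\
d(t) &= e^{t\Delta} d_{0} + \int_{0}^{t} e^{(t-s)\Delta}\bigl(-u\cdot\nabla d + |\nabla d|^{2} d\bigr)\,ds,
\end{align*}
with $\mathbb{P}$ the Leray projector. The linear bounds $\|e^{t\Delta}u_{0}\|_{Z}\lesssim\|u_{0}\|_{BMO^{-1}}$ and $\||e^{t\Delta}d_{0}|\|_{X}\lesssim\|d_{0}\|_{L^{\infty}}+[d_{0}]_{BMO}$ follow from the Carleson characterizations of $BMO^{-1}$ and $BMO$ via the heat semigroup, with $|d_{0}|\equiv 1$ furnishing the $L^{\infty}$ piece. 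For the nonlinear terms I would prove bilinear estimates of the form $\|B(u,v)\|_{Z}\lesssim\|u\|_{Z}\|v\|_{Z}$ and $\|B(\nabla d,\nabla\tilde d)\|_{Z}\lesssim\|d\|_{X}\|\tilde d\|_{X}$, together with the analogous bounds for the integrals appearing in the $d$-equation (including the trilinear piece arising from $|\nabla d|^{2}d$, which relies on the $L^{\infty}$ component of $\||\cdot|\|_{X}$). A standard fixed point in a ball of radius $O(\varepsilon)$ in $Z\times X$ then yields the unique global solution with the desired bound.

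For the weighted estimates I argue by induction on $N=k+m$. The case $N=0$ is the bound just obtained. In the spatial step ($k=0$, $m\geq 1$), I differentiate the mild equations $m$ times, multiply by $t^{m/2}$, and split $\int_{0}^{t}=\int_{0}^{t/2}+\int_{t/2}^{t}$: on $[0,t/2]$ the heat kernel supplies the smoothing $\|\nabla^{m}e^{(t-s)\Delta}\|\lesssim (t-s)^{-m/2}$ which absorbs $t^{m/2}$, while on $[t/2,t]$ one has $s\sim t$ and the weight factorizes as $t^{m/2}\sim s^{m_{1}/2}s^{m_{2}/2}$ with $m_{1}+m_{2}=m$, so the inductive hypothesis applies to each factor $\nabla^{m_{i}}u$ or $\nabla^{m_{i}}d$. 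For time derivatives I substitute the PDE:
\begin{align*}
\partial_{t}u=\Delta u-\mathbb{P}\nabla\cdot(u\otimes u+\nabla d\odot\nabla d),\qquad \partial_{t}d=\Delta d-u\cdot\nabla d+|\nabla d|^{2}d,
\end{align*}
so that $\partial_{t}^{k}\nabla^{m}(u,d)$ unfolds recursively into a finite sum of products of spatial derivatives of $u$ and $d$ of total order at most $2k+m$; the weight $t^{k+m/2}$ splits across these factors in accordance with their order, and the already-established spatial estimates close the induction.

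The most delicate step will be the nonlinear estimates coming from the director equation. The term $|\nabla d|^{2}d$ involves $d$ itself rather than one of its derivatives, which is precisely why $\||\cdot|\|_{X}$ carries the $\sup_{t}\|\cdot\|_{L^{\infty}}$ component; coordinating this component with the gradient gain $\sqrt{t}\|\nabla d\|_{L^{\infty}}$ and the Carleson piece requires careful bookkeeping throughout the bilinear/trilinear analysis. The time-derivative reduction adds a further combinatorial layer, since each substitution of the equation multiplies the number of product terms whose gradient count must be balanced against the weight $t^{k+m/2}$; I expect this is where the constants $C_{k,m}$ will grow rapidly with $k+m$ and where the induction scheme needs to be organized so that each use of the equation lowers the combined order $2k+m$ in a controlled way.
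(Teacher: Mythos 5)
You should first note that the paper never proves Theorem \ref{thm1.2}: it is imported as a black box from the literature (Theorem 1.6 of \cite{W} for the existence and the base bound, Theorem 1.5 of \cite{DW1,DW2} for the weighted estimates \eqref{eq1.5}), and is then used as a standing hypothesis throughout Section 3. So your attempt can only be compared with those cited sources, and against them your outline is a faithful reconstruction: Wang's proof is precisely a Picard iteration on the Duhamel formulation in $Z\times X$, resting on Koch--Tataru-type bilinear estimates (and a trilinear one for $|\nabla d|^{2}d$ that uses the $L^{\infty}$ component of $\||\cdot|\|_{X}$), while Du--Wang's proof of \eqref{eq1.5} is exactly the induction you describe --- spatial derivatives first, in the style of Germain--Pavlovi\'{c}--Staffilani with the $\int_{0}^{t/2}+\int_{t/2}^{t}$ splitting and Leibniz redistribution when $s\sim t$, then time derivatives eliminated by substituting the equation, as in Dong--Du. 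That last mechanism is also the one the present paper reuses in Step 2 of its proof of Theorem \ref{thm1.3}, cf.\ \eqref{eq3.20}.

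Two caveats keep your proposal at the level of a correct plan rather than a proof. First, the genuinely hard analytic content --- the bilinear estimate $\|B(u,v)\|_{Z}\lesssim\|u\|_{Z}\|v\|_{Z}$ in the Carleson-measure norm --- is asserted, not proved; that estimate is the core of Koch--Tataru's method and of Wang's adaptation to the coupled system, and nothing in your sketch indicates how it is obtained. Second, and more substantively, the fixed point only produces a solution of the mild system; to conclude that $(u,d)$ solves \eqref{eq1.1}--\eqref{eq1.4} one must still show that the sphere constraint $|d|=1$ is propagated in time (apply the maximum principle to the equation satisfied by $|d|^{2}$, using $\nabla\cdot u=0$), a step your outline omits even though your trilinear estimate implicitly exploits $\|d\|_{L^{\infty}}=1$. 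The paper itself flags exactly this point in the remark following the proof of Theorem \ref{thm1.3}. With these two items supplied, your scheme coincides with the proofs in \cite{W} and \cite{DW1,DW2}.
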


Now we present our main results as follows.

\begin{theorem}\label{thm1.3}
There exists $\varepsilon=\varepsilon(n)>0$ small enough, if
$u_{0}\in BMO^{-1}$ with $\nabla\cdot u_{0}=0 $, and $d_{0}\in BMO$
satisfies
\begin{align*}
\|u_{0}\|_{BMO^{-1}}+[d_{0}]_{BMO}\leq \varepsilon,
\end{align*}
then for any integer $k,m\geq 0$, there exists a constant $C_{k,m}$,
such that the global-in-time solution $(u,d)$ to the nematic liquid
crystal flow \eqref{eq1.1}--\eqref{eq1.4} present by Theorem
\ref{thm1.2} satisfies
\begin{align}\label{eq1.6}
\|t^{k+\frac{m}{2}}(\partial_{t}^{k}\nabla^{m}
u,\partial_{t}^{k}\nabla^{m}\nabla
d)\|_{\widetilde{L}^{\infty}(\mathbb{R}_{+};\dot{B}^{-1}_{\infty,\infty})\cap\widetilde{L}^{1}(\mathbb{R}_{+};\dot{B}^{1}_{\infty,\infty})}
\leq C_{m,k} \varepsilon.
\end{align}

\end{theorem}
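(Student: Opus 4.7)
The overall strategy follows the Navier--Stokes analysis of Zhang et al.\ \cite{ZZ} together with the bootstrapping scheme of Du and Wang \cite{DW1,DW2}, adapted to the coupled structure of system \eqref{eq1.1}--\eqref{eq1.4}. The starting point is the mild (Duhamel) formulation
\begin{align*}
u(t) &= e^{t\Delta}u_{0} - \int_{0}^{t} e^{(t-s)\Delta}\mathbb{P}\nabla\cdot\bigl(u\otimes u + \nabla d\odot\nabla d\bigr)(s)\,ds,\\
d(t) &= e^{t\Delta}d_{0} - \int_{0}^{t} e^{(t-s)\Delta}\bigl(u\cdot\nabla d - |\nabla d|^{2} d\bigr)(s)\,ds.
\end{align*}
Setting $w_{k,m}:=t^{k+m/2}\partial_{t}^{k}\nabla^{m} u$ and $v_{k,m}:=t^{k+m/2}\partial_{t}^{k}\nabla^{m}\nabla d$, I would proceed by induction on $2k+m$, using Theorem \ref{thm1.2} as the source of all ``reservoir'' bounds in $Z\times X$.

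For the base case $k=m=0$, the linear parts $e^{t\Delta}u_{0}$ and $\nabla e^{t\Delta}d_{0}$ lie in $\widetilde L^{\infty}(\dot B^{-1}_{\infty,\infty})\cap\widetilde L^{1}(\dot B^{1}_{\infty,\infty})$ by the standard heat-semigroup characterizations together with the embeddings $BMO^{-1}\hookrightarrow\dot B^{-1}_{\infty,\infty}$ and $BMO\hookrightarrow\dot B^{0}_{\infty,\infty}$; in particular $\int_{0}^{\infty} 2^{j}\|\Delta_{j}e^{t\Delta}f\|_{L^{\infty}}dt\lesssim 2^{-j}\|\Delta_{j}f\|_{L^{\infty}}$ delivers the $\widetilde L^{1}(\dot B^{1}_{\infty,\infty})$ piece. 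For the Duhamel pieces, I would apply $\Delta_{j}$ to the integral, expand the products $u\otimes u$, $\nabla d\odot\nabla d$, $u\cdot\nabla d$, $|\nabla d|^{2}d$ using Bony's paraproduct decomposition, and bound each frequency block using the Koch--Tataru estimate $\|u\|_{Z}+\|d\|_{X}\leq C_{0}\varepsilon$ from Theorem \ref{thm1.2} in combination with Bernstein's inequality and the decay $\|\Delta_{j}e^{(t-s)\Delta}\|_{L^{\infty}\to L^{\infty}}\lesssim e^{-c2^{2j}(t-s)}$.

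For the inductive step, assume the estimate \eqref{eq1.6} for all $(k',m')$ with $2k'+m'<2k+m$. Apply $\partial_{t}^{k}\nabla^{m}$ to both Duhamel formulas and use the identity $\partial_{t}\int_{0}^{t}e^{(t-s)\Delta}F\,ds=F(t)+\int_{0}^{t}\Delta e^{(t-s)\Delta}F\,ds$ iteratively, so that all the $\partial_{t}^{k}$ can be traded either for a boundary term (immediately plugged into the evolution equations \eqref{eq1.1}--\eqref{eq1.2} to convert it into lower-order spatial derivatives of nonlinear combinations of $u$ and $d$) or moved inside the integrand. After Leibniz expansion, each resulting nonlinear term is of the schematic form $t^{k+m/2}\partial_{t}^{k_{1}}\nabla^{m_{1}}(u\text{ or }\nabla d)\cdot\partial_{t}^{k_{2}}\nabla^{m_{2}}(u\text{ or }\nabla d)$ (or a trilinear analogue from $|\nabla d|^{2}d$), with $k_{1}+k_{2}\leq k$, $m_{1}+m_{2}\leq m+1$. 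Splitting the weight $t^{k+m/2}=t^{k_{1}+m_{1}/2}\cdot t^{k_{2}+m_{2}/2}\cdot t^{\alpha}$, each factor is controlled in $Z$ or $X$ by \eqref{eq1.5}, and the residual $t^{\alpha}$ is absorbed by the parabolic smoothing of $\Delta_{j}e^{(t-s)\Delta}$ in Chemin--Lerner norms.

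The principal obstacles I expect are: (i) the coupling term $\mathbb{P}\nabla\cdot(\nabla d\odot\nabla d)$ forces us to propagate estimates on $u$ and $\nabla d$ simultaneously, so the induction must be run on the pair and each step must close self-consistently; (ii) the trilinear term $|\nabla d|^{2}d$ requires a genuine three-factor Chemin--Lerner estimate, where one of the factors ($d$ itself) sits only in $L^{\infty}_{t,x}$ rather than in a decaying space, so the proof must carefully keep $d$ in $\widetilde L^{\infty}(L^{\infty})$ while $\nabla d$ absorbs the derivatives and time weights; and (iii) bookkeeping the combinatorics of the weight splitting and the Leibniz expansion of $\partial_{t}^{k}\nabla^{m}$, particularly when the $\partial_{t}^{k}$ boundary term is re-expressed through the equation, which formally raises the nonlinearity order --- this is controlled by the strong induction hypothesis, since every freshly produced term has strictly smaller $2k'+m'$ in each factor. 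Once these three points are settled, the bilinear/trilinear estimates each contribute a factor of $\varepsilon$, and smallness closes the induction to yield \eqref{eq1.6}.
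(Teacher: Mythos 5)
Your overall architecture does parallel the paper's: a base case for the mild formulation plus an induction in which $\partial_{t}^{k}$ is traded for spatial derivatives through the heat equation (the paper's identity $\partial_{t}^{k}\Psi=\Delta^{k}\Psi+\sum_{i=0}^{k-1}\Delta^{k-1-i}\partial_{t}^{i}F$, its \eqref{eq3.20}, is exactly your iterated Duhamel differentiation), with Theorem \ref{thm1.2} as the reservoir. But there is a genuine gap in the mechanism you propose for the bilinear Duhamel estimates. The tools you list --- Bony paraproducts, Bernstein, and the operator bound $\|\Delta_{j}e^{(t-s)\Delta}\|_{L^{\infty}\to L^{\infty}}\lesssim e^{-c2^{2j}(t-s)}$ --- only access the sup-in-time part of the Koch--Tataru norms, namely $\|u(s)\|_{L^{\infty}}\lesssim \varepsilon s^{-1/2}$ and $\|\nabla d(s)\|_{L^{\infty}}\lesssim \varepsilon s^{-1/2}$. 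These give $\|(u\otimes u+\nabla d\odot\nabla d)(s)\|_{L^{\infty}}\lesssim \varepsilon^{2}s^{-1}$, and then
\begin{align*}
\int_{0}^{t/2}e^{-c2^{2j}(t-s)}s^{-1}\,\mathrm{d}s=+\infty ,
\end{align*}
a divergence at $s=0$ that is uniform in frequency, so no paraproduct refinement repairs it. This is precisely why the paper does not argue in frequency space there: it uses the pointwise physical-space kernel bounds of Lemma \ref{lem2.5}, splits the Duhamel integral into $s\in(0,t/2)$ versus $s\in(t/2,t)$ and $|y|$ near versus far, and on the singular region bounds $\int_{0}^{2^{-2q}}\int_{|y-x|\lesssim 2^{-q}}(|u|^{2}+|\nabla d|^{2})\,\mathrm{d}y\,\mathrm{d}\tau$ by the Carleson-measure (local space-time $L^{2}$) component of $\|u\|_{Z}$ and $\|d\|_{X}$ --- the part of the $BMO^{-1}$-type norms your plan never actually invokes. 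Without this step your base case, and every term of $F_{1,q}$/$F_{3,q}$ type in the induction, does not close.

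A second flaw is in the bookkeeping: your claim that every freshly produced factor has strictly smaller $2k'+m'$ is false, because the divergence in $\mathbb{P}\nabla\cdot(u\otimes u+\nabla d\odot\nabla d)$ pushes $m_{1}+m_{2}$ up to $m+1$, so the pairing $(k_{1},m_{1})=(k,m+1)$, $(k_{2},m_{2})=(0,0)$ sits at level $2k+m+1$, above the level being proven; strong induction on $2k+m$ does not cover it. The paper avoids this in two ways: its induction runs on $k$ alone with the hypothesis holding for \emph{all} $m\geq 0$ (so arbitrarily high spatial orders in the nonlinearity remain inside the hypothesis, every factor carrying at most $k-1$ time derivatives), and the top-order term left by the identity is the pure spatial derivative $\Delta^{k}u$, which is absorbed by Step 1 --- this is exactly why the paper proves Step 1 for $k=0$ and all $m$ simultaneously rather than only $k=m=0$. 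You could repair your scheme by adopting that induction structure, or by bounding the problematic factors directly through \eqref{eq1.5} (which holds for all orders) instead of the induction hypothesis; but as written the induction does not close.
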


Obviously, Theorem \ref{thm1.3} implies the following decay estimate
immediately:

\begin{corollary}\label{cor1.4}
Under the assumptions of Theorem \ref{thm1.3}, the global-in-time
 solution $(u,d)$  to the nematic liquid
crystal flow \eqref{eq1.1}--\eqref{eq1.4} present by Theorem
\ref{thm1.2} satisfies
\begin{align*}
\|(\partial_{t}^{k}\nabla^{m} u,\partial_{t}^{k}\nabla^{m}\nabla
d)\|_{\dot{B}^{-1}_{\infty,\infty}}\leq C_{k,m} t^{-\frac{m}{2}-k},
\end{align*}
 for all $t\in\mathbb{R}_{+}$ and $k,m\geq 0$.

\end{corollary}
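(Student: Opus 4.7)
My plan is to read off the pointwise-in-$t$ bound directly from the Chemin--Lerner norm supplied by Theorem \ref{thm1.3}, so the work reduces to unpacking a definition. The key observation I will use is that, in the endpoint case $p=r=\infty$, the Chemin--Lerner space $\widetilde{L}^{\infty}(\mathbb{R}_{+};\dot{B}^{-1}_{\infty,\infty})$ coincides isometrically with the classical Bochner space $L^{\infty}(\mathbb{R}_{+};\dot{B}^{-1}_{\infty,\infty})$, because by the definitions
\[
\|f\|_{\widetilde{L}^{\infty}\dot{B}^{-1}_{\infty,\infty}}=\sup_{j\in\mathbb{Z}}\sup_{t>0}2^{-j}\|\Delta_{j}f(\cdot,t)\|_{L^{\infty}}=\sup_{t>0}\sup_{j\in\mathbb{Z}}2^{-j}\|\Delta_{j}f(\cdot,t)\|_{L^{\infty}}=\|f\|_{L^{\infty}\dot{B}^{-1}_{\infty,\infty}},
\]
i.e.\ the two suprema commute freely. (For other exponent configurations one only has an embedding in one direction, but here equality is immediate.)

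I would then apply this identification to $f=t^{k+m/2}\partial_{t}^{k}\nabla^{m}u$ and to $f=t^{k+m/2}\partial_{t}^{k}\nabla^{m}\nabla d$. Inequality \eqref{eq1.6} of Theorem \ref{thm1.3} gives in particular a uniform bound on the $\widetilde{L}^{\infty}\dot{B}^{-1}_{\infty,\infty}$ norm of each of these tempered distributions, which by the identity above translates into
\[
\sup_{t>0}t^{k+m/2}\|(\partial_{t}^{k}\nabla^{m}u,\partial_{t}^{k}\nabla^{m}\nabla d)(\cdot,t)\|_{\dot{B}^{-1}_{\infty,\infty}}\leq C_{k,m}\varepsilon.
\]
Dividing through by $t^{k+m/2}$ (and absorbing the harmless factor $\varepsilon$ into the constant $C_{k,m}$) then yields the pointwise decay estimate claimed in Corollary \ref{cor1.4} for every $t\in\mathbb{R}_{+}$.

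I do not anticipate any real obstacle here beyond bookkeeping about the order in which the dyadic and the temporal suprema are taken when unpacking the Chemin--Lerner norm; no Littlewood--Paley paraproduct estimates, no coupling between $u$ and $d$, and no inductive bootstrap on $k$ and $m$ will be needed at this stage, since all of that analytic work has already been done in the proof of Theorem \ref{thm1.3}. The $\widetilde{L}^{1}(\mathbb{R}_{+};\dot{B}^{1}_{\infty,\infty})$ piece of the norm in \eqref{eq1.6} is simply discarded for the present decay corollary, although it is presumably useful elsewhere in the paper for time-integrated estimates along trajectories.
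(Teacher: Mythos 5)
Your proposal is correct and is essentially the paper's own argument: the paper states that Corollary \ref{cor1.4} follows ``immediately'' from Theorem \ref{thm1.3}, and the content of that immediacy is exactly what you spell out, namely that at the endpoint $r=\rho=\infty$ the Chemin--Lerner norm $\widetilde{L}^{\infty}(\mathbb{R}_{+};\dot{B}^{-1}_{\infty,\infty})$ coincides with $L^{\infty}(\mathbb{R}_{+};\dot{B}^{-1}_{\infty,\infty})$ (consistent with Remark \ref{rem2.3}, where both inequalities hold when $r=\rho$), so the weighted bound in \eqref{eq1.6} yields the pointwise-in-$t$ decay after dividing by $t^{k+m/2}$.
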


By using Theorem 3.2 from Chemin \cite{C1} (or Theorem 3.2.2 from
\cite{C2}) and the result of the above Theorem \ref{thm1.3} for the
case $k=m=0$, we have the following result concerning the
trajectories of the Koch-Tataru  solution to
\eqref{eq1.1}--\eqref{eq1.4} present by Theorem \ref{thm1.2}.

\begin{theorem}\label{thm1.5}
Under the assumptions of Theorem \ref{thm1.3}, there exists a unique
continuous mapping $\gamma$ from $\mathbb{R}^{n+1}_{+}$ to
$\mathbb{R}^{n}$ such that
\begin{align*}
\gamma(x,t)=x+\int_{0}^{t} (u(\gamma(x,\tau),\tau),\nabla
d(\gamma(x,\tau),\tau))\text{d}\tau
\end{align*}
and
\begin{align}\label{eq1.7}
|\gamma(x_{1},t)-\gamma(x_{2},t)|\leq C
|x_{1}-x_{2}|^{1-C\varepsilon}\exp\left(C\varepsilon t\right)
\end{align}
for all $t>0$ and $|x_{1}-x_{2}|\leq 1$.

\end{theorem}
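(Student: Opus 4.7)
The plan is to reduce the theorem to a direct application of Chemin's trajectory result (Theorem 3.2 of \cite{C1} / Theorem 3.2.2 of \cite{C2}) applied to the combined vector field $U(x,t) := (u(x,t), \nabla d(x,t))$ on $\mathbb{R}^{n+1}_{+}$. The only nontrivial input required by Chemin's theorem is that $U$ enjoys enough regularity to be integrally log-Lipschitz in space, which is exactly what is furnished by Theorem \ref{thm1.3}.

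First, I would specialize Theorem \ref{thm1.3} to the case $k=m=0$. This yields
\begin{align*}
\|(u,\nabla d)\|_{\widetilde{L}^{\infty}(\mathbb{R}_{+};\dot{B}^{-1}_{\infty,\infty})\cap\widetilde{L}^{1}(\mathbb{R}_{+};\dot{B}^{1}_{\infty,\infty})}\leq C\varepsilon.
\end{align*}
In particular the $\widetilde{L}^{1}(\mathbb{R}_{+};\dot{B}^{1}_{\infty,\infty})$ bound, together with the $L^{\infty}_{t,x}$ bound coming from $\|u\|_{Z}+\|d\|_{X}\leq C_{0}\varepsilon$ (used to control the low-frequency blocks via a Bernstein-type estimate), is the key regularity: splitting the Littlewood-Paley sum of $U(x,\tau)-U(y,\tau)$ at a frequency $2^{N}\sim|x-y|^{-1}$, estimating high frequencies by $\sum_{j\geq N}\|\dot{\Delta}_{j}U\|_{L^{\infty}}$ and low frequencies by $|x-y|\sum_{j<N}2^{j}\|\dot{\Delta}_{j}U\|_{L^{\infty}}$, and integrating in time gives the log-Lipschitz estimate
\begin{align*}
\int_{0}^{t}|U(x,\tau)-U(y,\tau)|\,\mathrm{d}\tau\leq C\varepsilon\,|x-y|\log\frac{e}{|x-y|},\qquad|x-y|\leq 1.
\end{align*}

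Next, this log-Lipschitz bound is precisely the hypothesis of Chemin's theorem, so the ODE system $\dot{\gamma}(x,t)=U(\gamma(x,t),t)$, $\gamma(x,0)=x$ admits a unique global continuous flow $\gamma:\mathbb{R}^{n+1}_{+}\to\mathbb{R}^{n}$. Setting $\rho(t):=|\gamma(x_{1},t)-\gamma(x_{2},t)|$ and using the estimate above yields the integral inequality
\begin{align*}
\rho(t)\leq|x_{1}-x_{2}|+C\int_{0}^{t}\|U(\cdot,\tau)\|_{\dot{B}^{1}_{\infty,\infty}}\,\rho(\tau)\log\frac{e}{\rho(\tau)}\,\mathrm{d}\tau.
\end{align*}
Osgood's lemma applied to the modulus of continuity $r\mapsto r\log(e/r)$ then gives $\rho(t)\leq C|x_{1}-x_{2}|^{\exp(-C\int_{0}^{t}\|U(\cdot,\tau)\|_{\dot{B}^{1}_{\infty,\infty}}\mathrm{d}\tau)}$, and since $\int_{0}^{t}\|U(\cdot,\tau)\|_{\dot{B}^{1}_{\infty,\infty}}\mathrm{d}\tau\leq C\varepsilon$, a straightforward elementary manipulation converts this to the claimed form \eqref{eq1.7}: $|\gamma(x_{1},t)-\gamma(x_{2},t)|\leq C|x_{1}-x_{2}|^{1-C\varepsilon}\exp(C\varepsilon t)$.

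I do not expect any real obstacle in this argument, since Theorem \ref{thm1.3} already contains all the hard analytic work; the remaining step is an application of Chemin's black-box result, which is the standard machinery for passing from Besov-type regularity of a vector field to Hölder regularity of its flow. The only point requiring a little care is bookkeeping of the small constant: the Hölder exponent $1-C\varepsilon$ and the exponential growth rate $C\varepsilon$ must both be traced back to the single smallness bound $\|U\|_{\widetilde{L}^{1}(\dot{B}^{1}_{\infty,\infty})}\leq C\varepsilon$ furnished by Theorem \ref{thm1.3}.
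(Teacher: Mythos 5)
Your overall strategy (Littlewood--Paley splitting of $U=(u,\nabla d)$ at a frequency tied to the spatial separation, plus Chemin's flow machinery) is in the right spirit, but as written it has two genuine gaps, both traceable to the low frequencies and to the difference between $L^{1}_{t}$ and Chemin--Lerner norms.

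First, your claimed integrated log-Lipschitz estimate
$\int_{0}^{t}|U(x,\tau)-U(y,\tau)|\,\mathrm{d}\tau\leq C\varepsilon|x-y|\log\frac{e}{|x-y|}$
does not follow from the available bounds. The high frequencies $j\geq N$ and the intermediate ones $0<j<N$ are indeed controlled by $\widetilde{L}^{1}(\mathbb{R}_{+};\dot{B}^{1}_{\infty,\infty})$, but the sum over $j\leq 0$ of $|x-y|\,2^{j}\|\Delta_{j}U\|_{L^{\infty}}$ has infinitely many terms and cannot be closed with that norm; the ``Bernstein-type estimate via the $L^{\infty}_{t,x}$ bound'' you invoke is not available either, since the $Z$- and $X$-norms only give $\|U(\tau)\|_{L^{\infty}}\lesssim\varepsilon\tau^{-1/2}$, whose contribution does not vanish as $|x-y|\to 0$. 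The correct treatment, which is what the paper does, bounds the frequencies $j\leq 0$ by $2^{2j}\|U(\tau)\|_{\dot{B}^{-1}_{\infty,\infty}}$ and uses $\widetilde{L}^{\infty}(\mathbb{R}_{+};\dot{B}^{-1}_{\infty,\infty})$, producing an extra term $C\varepsilon t\,|x-y|$. That term is exactly the origin of the factor $\exp(C\varepsilon t)$ in \eqref{eq1.7}; your intermediate conclusion $\rho(t)\leq C|x_{1}-x_{2}|^{1-C\varepsilon}$ uniformly in $t$ is strictly stronger than the theorem and cannot be derived from the hypotheses (the low-frequency part of $U$ is only bounded like a time-independent Lipschitz field of size $\varepsilon$, which inevitably causes $e^{C\varepsilon t}$ separation growth under Gronwall).

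Second, your Osgood step needs $\int_{0}^{t}\|U(\cdot,\tau)\|_{\dot{B}^{1}_{\infty,\infty}}\,\mathrm{d}\tau\leq C\varepsilon$, i.e.\ an $L^{1}(\mathbb{R}_{+};\dot{B}^{1}_{\infty,\infty})$ bound. Theorem \ref{thm1.3} only provides the Chemin--Lerner norm $\widetilde{L}^{1}(\mathbb{R}_{+};\dot{B}^{1}_{\infty,\infty})$, which by Remark \ref{rem2.3} (here $r=\infty\geq\rho=1$) is the \emph{weaker} of the two: it is a supremum over frequencies of time integrals, not a time integral of a supremum, and the two cannot be interchanged. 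This is why the paper does not run an Osgood argument with a time-dependent modulus: instead it fixes the cutoff $q$ once and for all, writes the Gronwall inequality for $D_{q}(t)$ with the fixed low-frequency field $\nabla S_{q}(u,\nabla d)$, estimates $\int_{0}^{t}\|\nabla S_{q}(u,\nabla d)\|_{L^{\infty}}\,\mathrm{d}s\leq C\varepsilon(t+q)$ by summing finitely many dyadic blocks each controlled by the tilde norm (plus the $\dot{B}^{-1}_{\infty,\infty}$ bound for $p\leq 0$), and only at the very end chooses $2^{q}\equiv|x_{1}-x_{2}|^{-1}$, so that $e^{C\varepsilon q}=|x_{1}-x_{2}|^{-C\varepsilon/\log 2}$ produces the H\"older exponent $1-C\varepsilon$. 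To repair your proof you would need to adopt this fixed-cutoff Gronwall structure (or upgrade the nonlinear estimates to genuine $L^{1}_{t}\dot{B}^{1}_{\infty,\infty}$ bounds, which the paper does not provide).
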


The remaining parts of the present paper are organized as follows.
 In section 2, we
introduce the Littlewood-Paley decomposition, the definition of
Besov space, and some useful estimates of the semi-group of the heat
equation on distributions the Fourier transform of which is
supported in a rang. In section 3, by using Fourier local analysis
method, we give the proof of Theorem \ref{thm1.3}. The last section
is devoted to proving Theorem \ref{thm1.5}. Throughout this paper,
we use $\|\cdot\|_{X}$ to denote the norm of the scalar
$X$-functions or the norm of the $n$-vector $X$-functions. We also
denote by $C$ the various positive constant only depending on the
dimension number $n$, in particular, we denote by $C=C_{a,b,\cdots}$
the positive constant which depends only on the dimension number $n$
and the indicated parameters $a$, $b$, $\cdots$. All such constants
may vary from line to line.

\section{Preliminaries}\label{Pre}

In this section, we are going to recall the dyadic partition of
unity in the Fourier variable, the so-called, Littlewood--Paley
theory, the definition of Besov space, and some classical results on
the  heat operator. Part of the materials presented here can be
found in \cite{C1,C2,PG,XZ,ZZ}. Let $\mathcal{S}(\mathbb{R}^{n})$ be
the Schwartz class of rapidly decreasing functions.
 Let  $\varphi \in
\mathcal{S}(\mathbb{R}^{3})$ with values in $[0,1]$ such that
$\varphi$ is supported in
$\mathfrak{C}=\{\xi\in\mathbb{R}^{n}:\frac{3}{4}\leq |\xi|\leq
\frac{8}{3}\}$ and
\begin{align*}
\sum_{j\in\mathbb{Z}}\varphi_{j}(\xi)=1 \quad\text{ for any
}\xi\in\mathbb{R}^{n}\backslash\{0\},
\end{align*}
where $\varphi_{j}(\xi):\triangleq\varphi(2^{-j}\xi)$. Denoting
$h_{j}=\mathcal{F}^{-1}\varphi_{j}$, we define the homogeneous
dyadic blocks as
\begin{align}\label{eq2.1}
&{\Delta}_{j} u:\triangleq\varphi(2^{-j}D)u=\int_{\mathbb{R}^{n}}
h_{j}(y)u(x-y)\text{d}y \quad\text{ and }
{S}_{j}f:\triangleq\sum_{k\leq j-1}\Delta_{j}f,\quad \forall j\in
\mathbb{Z},
\end{align}
where $D=(D_{1},D_{2},\cdots,D_{n})$ and
$D_{j}=i^{-1}\partial_{x_{j}}, (j=1,2,3)$ with $i^{2}=-1$. Let
$S'_{h}(\mathbb{R}^{n})$ be the space of tempered distributions $u$
such that
\begin{align*}
\lim_{\lambda\rightarrow\infty}\|\theta(\lambda D)
u\|_{L^{\infty}}=0 \text{ for any } \theta\in
\mathcal{D}(\mathbb{R}^{n}),
\end{align*}
where $\mathcal{D}(\mathbb{R}^{n})$ is the space of smooth compactly
supported functions on $\mathbb{R}^{n}$.
 Then we have the formal decomposition
\begin{align*}
u=\sum_{j\in\mathbb{Z}}\Delta_{j} u, \forall
u\in\mathcal{S}'_{h}(\mathbb{R}^{n}).
\end{align*}
Informally, ${\Delta}_{j}={S}_{j+1}-{S}_{j}$ is a frequency
projection to the annulus $\{ |\xi|\approx 2^{j}\}$, while $S_{j}$
is the frequency projection to $\{0<|\xi|\lesssim 2^{j}\}$.  One
easily verifies that with the above choice of $\varphi$,
\begin{align*}
\Delta_{j}\Delta_{k}f\equiv 0\text{ if }|j-k|\geq 2 \text{ and }
\Delta_{j}(S_{k-1}f\Delta_{k}f )\equiv 0\text{ if } |j-k|\geq 5.
\end{align*}
We recall now the definition of the homogeneous Besov spaces from
\cite{C2,PG}.

\begin{definition}\label{def2.1}
(homogeneous Besov spaces $\dot{B}^{s}_{p,r}(\mathbb{R}^{n})$) Let
$s\in\mathbb{R}$, $1\leq p,r\leq \infty$, we set
\begin{equation*}
\|u\|_{{\dot{B}}^{s}_{p,r}} :\triangleq \left\{
\begin{array}{l}
\left(\sum_{j\in\mathbb{Z}}
2^{jsr}\|\Delta_{j}u\|_{L^{p}}^{r}\right)^{\frac{1}{r}}
\quad\text{ for } 1\leq r< \infty, \\
\sup_{j\in\mathbb{Z}}
2^{js}\|\Delta_{j}u\|_{L^{p}}\quad\quad\quad\text{ for }r=\infty.
\end{array}
\right.
\end{equation*}
\begin{itemize}
\item For $s<\frac{n}{p}$ (or $s=\frac{n}{p}$ if $r=1$), we define
$\dot{B}^{s}_{p,r}(\mathbb{R}^{n})\triangleq
\{u\in\mathcal{S}'_{h}(\mathbb{R}^{n})|
\|u\|_{\dot{B}^{s}_{p,r}}<\infty\}.$

\item If $k\in\mathbb{N}$ and $\frac{n}{p}+k\leq s<\frac{n}{p}+k+1$
(or $s=\frac{n}{p}+k+1$ if $r=1$), the
$\dot{B}^{s}_{p,r}(\mathbb{R}^{n})$ is defined as the subset of
distributions $u\in \mathcal{S}'_{h}(\mathbb{R}^{n})$ such that
$\partial_{x}^{\gamma} u\in \dot{B}^{s-k}_{p,r}(\mathbb{R}^{n})$
whenever $\partial^{\gamma}_{x}:=
\partial^{\gamma_{1}}_{x_{1}}\partial^{\gamma_{2}}_{x_{2}}\cdots
\partial^{\gamma_{n}}_{x_{n}}$ with $\gamma= (\gamma_{1},\gamma_{2},\cdots,\gamma_{n})$ and $|\gamma|=k$. For short, we just denote the
space $\dot{B}^{s}_{p,r}(\mathbb{R}^{n})$ by $\dot{B}^{s}_{p,r}$.
\end{itemize}
\end{definition}

We still need to define the so called Chemin-Lerner type spaces
$\widetilde{L}^{\rho}_{T}(\dot{B}^{s}_{p,r}(\mathbb{R}^{n}))$, i.e.,

\begin{definition}\label{def2.2}
Let  $s\in\mathbb{R}$, $1\leq p,r,\rho\leq \infty$ and $T\in
(0,+\infty]$, we define
$\widetilde{L}^{\rho}_{T}(\dot{B}^{s}_{p,r}(\mathbb{R}^{n}))$ as the
completion of $C([0,T];\mathcal{S}(\mathbb{R}^{3}))$ by the norm
\begin{align*}
\|f\|_{\widetilde{L}^{\rho}_{T}(\dot{B}^{s}_{p,r})} :\triangleq
\left(\sum_{j\in\mathbb{Z}}2^{jsr}\|\Delta_{j}
f\|_{L^{\rho}_{T}(L^{p})}^{r}\right)^{\frac{1}{r}},
\end{align*}
with the usual change if $r=\infty$. For short, we just denote this
space by $\widetilde{L}^{\rho}_{T}(\dot{B}^{s}_{p,r})$.
\end{definition}

\begin{remark}\label{rem2.3}
By virtue of the Minkowski inequality, one obtains
\begin{align*}
&\|u\|_{\widetilde{L}^{\rho}_{T}(\dot{B}^{s}_{p,r})}\leq
\|u\|_{L^{\rho}_{T}(\dot{B}^{s}_{p,r})} \text{ if } r\geq \rho,
\text{ and }
\quad\|u\|_{\widetilde{L}^{\rho}_{T}(\dot{B}^{s}_{p,r})}\geq
\|u\|_{L^{\rho}_{T}(\dot{B}^{s}_{p,r})} \text{ if } r\leq \rho.
\end{align*}
\end{remark}
\medskip

The following lemma gives the way the product acts on Chemin-Lerner
type spaces.

\begin{lemma}\label{lem2.4}
Let $s_{1},s_{2}\in \mathbb{R}$,  $p, p_{1}, p_{2}, q, q_{1}, q_{2},
r_{1},r_{2}\in [1,\infty]$ such that
\begin{align*}
\frac{1}{q_{1}}+\frac{1}{q_{2}}=\frac{1}{q},
\frac{1}{r_{1}}+\frac{1}{r_{2}} =\frac{1}{r}\leq 1 \text{  and }
p\geq \max\{p_{1}, p_{2}\}
\end{align*}
If $s_{1}+s_{2}>0$, the product maps
$\widetilde{L}^{q_{1}}_{T}(\dot{B}^{s_{1}}_{p_{1},r_{1}})\times
\widetilde{L}^{q_{2}}_{T}(\dot{B}^{s_{2}}_{p_{2},r_{2}})$ into
$\widetilde{L}^{q}_{T}(\dot{B}^{s_{1}+s_{2}-n(\frac{1}{p_{1}}+\frac{1}{p_{2}}-\frac{1}{p})}_{p,r})$.
If $s_{1}=-s_{2}$ and $r=1$, the product maps
$\widetilde{L}^{q_{1}}_{T}(\dot{B}^{s_{1}}_{p_{1},r_{1}})\times
\widetilde{L}^{q_{2}}_{T}(\dot{B}^{s_{2}}_{p_{2},r_{2}})$ into
$\widetilde{L}^{q}_{T}(\dot{B}^{-n(\frac{1}{p_{1}}+\frac{1}{p_{2}}-\frac{1}{p})}_{p,\infty})$.

\end{lemma}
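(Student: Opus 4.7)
The plan is to use Bony's paraproduct decomposition
\[uv = T_u v + T_v u + R(u,v),\]
where $T_u v := \sum_{k\in\mathbb{Z}} S_{k-1}u\,\Delta_k v$ and $R(u,v):=\sum_{k\in\mathbb{Z}}\Delta_k u\,\widetilde{\Delta}_k v$ with $\widetilde{\Delta}_k := \Delta_{k-1}+\Delta_k+\Delta_{k+1}$, and to estimate each of the three pieces in $\widetilde{L}^q_T(\dot{B}^{\sigma}_{p,r})$, where $\sigma := s_1+s_2-n(1/p_1+1/p_2-1/p)$. The three standard ingredients will be: dyadic spectral localisation (to identify which indices $k$ contribute to $\Delta_j$ of each piece), Bernstein's inequality (which produces the loss $2^{jn(1/p_0-1/p)}$ when going from $L^{p_0}$ to $L^p$ on frequency-localised functions, and which is where the hypothesis $p\ge\max\{p_1,p_2\}$ enters), and the H\"older/Young inequalities in time and in the dyadic index, which invoke $1/q_1+1/q_2=1/q$ and $1/r_1+1/r_2=1/r$ respectively.

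For the two paraproducts, the spectral support of $S_{k-1}u\,\Delta_k v$ sits in an annulus of size $\sim 2^k$, so $\Delta_j(T_u v)$ only picks up finitely many $k$ with $|j-k|\le N$ for a fixed $N$. Bounding $\|S_{k-1}u\|_{L^{q_1}_T L^{\infty}_x}$ by a sum of $2^{\ell n/p_1}\|\Delta_\ell u\|_{L^{q_1}_T L^{p_1}}$ through Bernstein, pairing with $\|\Delta_k v\|_{L^{q_2}_T L^{p_2}}$ via H\"older in time, and summing in $k$ via Young with exponents $(r_1,r_2,r)$ yields
\[\|T_u v\|_{\widetilde{L}^q_T(\dot{B}^{\sigma}_{p,r})}\lesssim \|u\|_{\widetilde{L}^{q_1}_T(\dot{B}^{s_1}_{p_1,r_1})}\,\|v\|_{\widetilde{L}^{q_2}_T(\dot{B}^{s_2}_{p_2,r_2})},\]
with no sign restriction on $s_1+s_2$; the bound for $T_v u$ is symmetric. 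The factor $-n(1/p_1+1/p_2-1/p)$ in $\sigma$ appears here precisely as the Bernstein loss in passing from $L^{p_0}$ with $1/p_0=1/p_1+1/p_2$ to $L^p$.

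The main obstacle is the remainder $R(u,v)$, because the spectrum of $\Delta_k u\,\widetilde{\Delta}_k v$ lies in a \emph{ball} of radius $\sim 2^k$ rather than in an annulus, so $\Delta_j R$ is nontrivial only under the one-sided constraint $k\ge j-N$. Applying Bernstein once more reduces the weighted dyadic estimate to
\[2^{j\sigma}\|\Delta_j R(u,v)\|_{L^q_T L^p}\lesssim \sum_{k\ge j-N} 2^{(j-k)(s_1+s_2)}\,\alpha_k\,\beta_k,\]
where $\alpha_k:=2^{ks_1}\|\Delta_k u\|_{L^{q_1}_T L^{p_1}}\in\ell^{r_1}$ and $\beta_k:=2^{ks_2}\|\widetilde{\Delta}_k v\|_{L^{q_2}_T L^{p_2}}\in\ell^{r_2}$ by definition of the Chemin-Lerner norms. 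If $s_1+s_2>0$, the geometric factor $2^{(j-k)(s_1+s_2)}$ defines an $\ell^1$ convolution kernel in $j-k$; Young's inequality combined with the H\"older bound $\|\alpha\beta\|_{\ell^r}\le\|\alpha\|_{\ell^{r_1}}\|\beta\|_{\ell^{r_2}}$ then delivers the $\ell^r_j$ estimate, proving the first conclusion. If $s_1=-s_2$ the geometric decay disappears, and the natural substitute is to take $r=1$, i.e.\ $1/r_1+1/r_2=1$: then $\|\alpha\beta\|_{\ell^1}\le\|\alpha\|_{\ell^{r_1}}\|\beta\|_{\ell^{r_2}}$ bounds the sum uniformly in $j$, which is an $\ell^\infty_j$ bound and yields exactly the second conclusion $\widetilde{L}^q_T(\dot{B}^{-n(1/p_1+1/p_2-1/p)}_{p,\infty})$. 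This dichotomy between the two cases is entirely an artefact of the remainder and is the only place where the positivity of $s_1+s_2$, or its failure, plays a role.
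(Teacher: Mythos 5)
The paper gives no proof of this lemma at all --- it is invoked as a known product law for Chemin--Lerner spaces --- so there is no in-paper argument to compare yours against; it must stand on its own. Your Bony decomposition and your treatment of the remainder $R(u,v)$ are correct and standard: the spectrum of $\Delta_k u\,\widetilde{\Delta}_k v$ lies in a ball of radius $\sim 2^k$, the estimate reduces to $\sum_{k\ge j-N}2^{(j-k)(s_1+s_2)}\alpha_k\beta_k$, and the dichotomy (Young's $\ell^1$-convolution combined with the H\"older bound $\|\alpha\beta\|_{\ell^{r}}\le\|\alpha\|_{\ell^{r_1}}\|\beta\|_{\ell^{r_2}}$ when $s_1+s_2>0$; only a uniform-in-$j$, i.e.\ $\ell^{\infty}$, bound when $s_1=-s_2$ and $r=1$) is indeed exactly where the two conclusions part ways.

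The genuine gap is in the paraproduct step. Your bound $\|S_{k-1}u\|_{L^{q_1}_T L^{\infty}}\le\sum_{\ell\le k-2}2^{\ell n/p_1}\|\Delta_\ell u\|_{L^{q_1}_T L^{p_1}}=\sum_{\ell\le k-2}2^{\ell(n/p_1-s_1)}\alpha_\ell$ is only useful if this series converges and is $\lesssim 2^{k(n/p_1-s_1)}$ times an $\ell^{r_1}$ quantity; that geometric summation requires $s_1<n/p_1$ (or $s_1=n/p_1$ with $r_1=1$), a hypothesis that appears nowhere in the lemma. When $s_1>n/p_1$ the terms with $\ell\to-\infty$ blow up, so your claim that the paraproduct bound holds ``with no sign restriction'' conceals a condition the stated hypotheses do not supply. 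The failure is not an artefact of the method: with $n=1$, $p_1=p_2=p=\infty$, $u=\sin x\in\dot{B}^{1}_{\infty,\infty}$ and $v=2^{j/2}\cos(2^j x)$, which satisfies $\|v\|_{\dot{B}^{-1/2}_{\infty,\infty}}\sim 1$, one finds $\|uv\|_{\dot{B}^{1/2}_{\infty,\infty}}\gtrsim 2^{j}$ although $s_1+s_2=\tfrac12>0$; so the unqualified statement cannot be proved, and the low-frequency conditions $s_1<n/p_1$, $s_2<n/p_2$ (or their endpoint versions with third index equal to $1$) must be added --- with them, your paraproduct argument does close. Two smaller points: the summation over $k$ you describe is not ``Young with exponents $(r_1,r_2,r)$'' --- since $1/r_1+1/r_2=1/r$ that inequality is H\"older; the correct mechanism is Young's inequality $\ell^1\ast\ell^{r_1}\subset\ell^{r_1}$ applied to the sequence $k\mapsto 2^{-k(n/p_1-s_1)}\|S_{k-1}u\|_{L^{q_1}_TL^{\infty}}$, followed by H\"older against $\beta\in\ell^{r_2}$; carrying the full sequence through in this way (rather than taking a supremum over $k$) is precisely what places the paraproduct in $\ell^{r}_j$ and not merely in $\ell^{r_2}_j$, which matters because $r\le r_2$.
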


We end the section by the following lemma, which gives useful
estimates for the semi-group of the neat equation restricted to
functions with compact supports away from the origin in Fourier
transform variables (see Lemma 2.1 in \cite{C1}, Lemma 2.1.2 in
\cite{C2}, and Lemmas 2.1 and 3.1 in \cite{ZZ}).


\begin{lemma}\label{lem2.5}
Let $\varphi$ be the smooth function defined as the beginning of
this section, and $\widetilde{\varphi}$ be the smooth function
supported in the ring $\widetilde{\mathfrak{C}}=
\{\xi\in\mathbb{R}^{n}; \frac{3}{8}\leq |\xi|\leq \frac{10}{3}\}$
such that $\widetilde{\varphi}\varphi=\varphi$. Let $1\leq i.j,k\leq
n$, $m\geq 0$
\begin{align*}
&g^{i,j,k}_{q}(x,t):\triangleq \int_{\mathbb{R}^{n}} e^{i x\cdot\xi}
\varphi(2^{-q}\xi) e^{-t|\xi|^{2}}
\left(\delta_{ij}-\frac{\xi_{i}\xi_{j}}{|\xi|^{2}}\right)\xi_{k}\text{d}\xi,\nonumber\\
&g_{1,q}^{i,j}(x,t):\triangleq \int_{\mathbb{R}^{n}} e^{i x\cdot\xi}
\varphi(2^{-q}\xi) e^{-t|\xi|^{2}}
\left(\delta_{ij}-\frac{\xi_{i}\xi_{j}}{|\xi|^{2}}\right)\text{d}\xi,\nonumber\\
&g_{2,q}^{i,j}(x,t):\triangleq \int_{\mathbb{R}^{n}} e^{i x\cdot\xi}
\varphi(2^{-q}\xi) e^{-t|\xi|^{2}}\text{d}\xi
\end{align*}
and
\begin{align*}
g_{3,q}(x,t) :\triangleq \int_{\mathbb{R}^{n}} e^{i x\cdot\xi}
\widetilde{\varphi}(2^{-q}\xi)t^{\frac{m}{2}} \xi^{\gamma}
e^{-t|\xi|^{2}}\text{d}\xi,
\end{align*}
where $|\gamma|=m$. Then there exist three positive constants $c$,
$C$ and $C_{m}$such that, for all $q\in\mathbb{Z}$ and $t>0$, we
have
\begin{align*}
&|g_{q}^{i,j,k}(x,t)|\leq C \frac{2^{q(n+1)}}{1+|2^{q}x|^{2n}}
e^{-ct2^{2 q}};\\
&|t^{\frac{m}{2}}\nabla^{m} g_{q}^{i,j,k}(x,t)|\leq C_{m}
\frac{2^{q(n+1)}}{1+|2^{q}x|^{2n}} e^{-ct2^{2
q}};\\
&|g_{1,q}^{i,j}(x,t)|+|g_{2,q}(x,t)|\leq C
\frac{2^{qn}}{1+|2^{q}x|^{2n}} e^{-ct2^{2
q}};\\
&|t^{\frac{m}{2}}\nabla^{m} g_{2,q}(x,t)|+|g_{3,q}(x,t)|\leq C_{m}
\frac{2^{qn}}{1+|2^{q}x|^{2n}} e^{-ct2^{2 q}}.
\end{align*}
\end{lemma}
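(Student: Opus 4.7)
The plan is to deduce all four displays from a single model estimate by a rescaling in frequency. For the first bound, substituting $\eta = 2^{-q}\xi$ rewrites
\[
g_q^{i,j,k}(x,t) = 2^{q(n+1)}\int_{\mathbb{R}^n} e^{i(2^q x)\cdot\eta}\,\varphi(\eta)\,e^{-s|\eta|^2}\Bigl(\delta_{ij}-\frac{\eta_i\eta_j}{|\eta|^2}\Bigr)\eta_k\,d\eta,\qquad s := t\,2^{2q},
\]
so it suffices to show that the model integral $G(y,s) := \int_{\mathbb{R}^n} e^{iy\cdot\eta}\, a(\eta)\,e^{-s|\eta|^2}\,d\eta$ satisfies $|G(y,s)| \le C(1+|y|^{2n})^{-1}e^{-cs}$ whenever $a$ is smooth with support in a fixed compact annulus avoiding the origin. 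The three other quantities in the statement are handled by exactly the same scheme: $g_{1,q}^{i,j}$ and $g_{2,q}$ drop the factor $\xi_k$, so the prefactor is $2^{qn}$ rather than $2^{q(n+1)}$; and $g_{3,q}$ simply replaces $\varphi$ by $\widetilde\varphi$, which is supported in the slightly larger annulus $\{3/8 \le |\eta| \le 10/3\}$, still bounded away from the origin.

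For the pointwise estimate on $G$, the temporal decay $e^{-cs}$ is automatic because $|\eta|^2 \ge 9/16$ on $\mathrm{supp}\,a$. For the spatial decay I would use $(1-\Delta_\eta)^n e^{iy\cdot\eta} = (1+|y|^2)^n e^{iy\cdot\eta}$ and integrate by parts $2n$ times in $\eta$, with vanishing boundary terms by the compact support of $a$. Expanding $(1-\Delta_\eta)^n$ by Leibniz on $a(\eta)\, e^{-s|\eta|^2}$, each derivative that lands on the Gaussian produces a factor polynomial in $s\eta$ of degree at most $2n$; since $|\eta|$ is bounded on the support, the total amplitude is controlled by $C(1+s^{2n})e^{-(9/16)s} \le C' e^{-cs}$ for any fixed $c < 9/16$. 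Using $(1+|y|^2)^n \gtrsim 1+|y|^{2n}$ and undoing the scaling $y = 2^q x$ yields the announced bound, after noting that $a_0(\eta) := (\delta_{ij} - \eta_i\eta_j/|\eta|^2)\eta_k$ is smooth on the annulus because the singularity of $\eta_i\eta_j/|\eta|^2$ at the origin is excluded from $\mathrm{supp}\,\varphi$.

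The derivative estimates on $t^{m/2}\nabla_x^m g_q^{i,j,k}$ and $t^{m/2}\nabla^m g_{2,q}$ fit the same template: differentiating under the integral sign in $x$ brings out $(i\xi)^\alpha$ with $|\alpha| = m$, which after the change of variables becomes $2^{qm}\eta^\alpha$, cancelling exactly against $t^{m/2} = 2^{-qm}s^{m/2}$ to leave the amplitude $s^{m/2}\eta^\alpha a(\eta)$. The bound $\sup_{s\ge 0} s^{m/2}e^{-s|\eta|^2/2} \le C_m$ on the support lets us absorb the new polynomial factor into half of the Gaussian, so the previous integration-by-parts argument reproduces the same shape with an $m$-dependent constant $C_m$. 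The estimate on $g_{3,q}$ is literally identical, with $\widetilde\varphi$ replacing $\varphi$ and $\xi^\gamma$ playing the role of $(i\xi)^\alpha$.

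The only technical obstacle is the bookkeeping in the Leibniz expansion of $(1-\Delta_\eta)^n$ acting on the product $a(\eta)e^{-s|\eta|^2}$: one must check that every one of the resulting terms is dominated uniformly in $s \ge 0$ by $C e^{-cs}$. This reduces to the elementary fact that $\sup_{s \ge 0} s^{k} e^{-\delta s} < \infty$ for each $k \in \mathbb{N}$ and $\delta > 0$, at the mild cost of decreasing the sharp decay rate $9/16$ to a strictly smaller positive constant $c$ in the final inequalities. All the remaining ingredients — the change of variables, boundedness of $a_0$ on the annulus, and commuting differentiation with the Fourier integral — are routine and cause no additional difficulty.
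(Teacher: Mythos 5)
Your proof is correct. Note that the paper itself does not prove Lemma 2.5 at all: it simply cites Lemma 2.1 of Chemin's paper, Lemma 2.1.2 of his lecture notes, and Lemmas 2.1 and 3.1 of Zhang--Zhang. Your argument --- rescaling $\xi = 2^{q}\eta$ to reduce to a model integral over a fixed annulus, extracting the factor $e^{-cs}$ from the lower bound on $|\eta|$ on the support, and obtaining the spatial decay by integrating $(1-\Delta_{\eta})^{n}$ by parts against $e^{iy\cdot\eta}$ --- is exactly the standard proof underlying those cited results, so you have in effect supplied the proof the paper outsources. Two small points to tidy up: the lower bound $|\eta|^{2}\geq 9/16$ is valid on $\mathrm{supp}\,\varphi$ but must be replaced by $|\eta|^{2}\geq 9/64$ on $\mathrm{supp}\,\widetilde{\varphi}$ (harmless, since $c$ only needs to be some positive constant); and in the derivative estimates the constant should be checked to be uniform over all multi-indices $\alpha$ with $|\alpha|=m$, which is immediate since there are finitely many and the amplitudes $\eta^{\alpha}a(\eta)$ are uniformly smooth on the compact annulus.
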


\section{The proof of Theorem \ref{thm1.3}}

In this section, we shall give the proof of Theorem \ref{thm1.3}. We
first give the following lemma, which gives some estimates for the
linear heat equations.

\begin{lemma}\label{lem3.1}
Taking a constant $\varepsilon=\varepsilon(n)>0$ small enough, and
the initial data $(u_{0},d_{0})\in BMO^{-1}\times BMO$ with
$\|u_{0}\|_{BMO^{-1}}+[d_{0}]_{BMO}\leq \varepsilon$, the for any
integer $m\geq 0$, we have
\begin{align}\label{eq3.1}
\|t^{\frac{m}{2}}(\nabla^{m}e^{t\Delta} u_{0},
\nabla^{m+1}e^{t\Delta} d_{0}
)\|_{\widetilde{L}^{\infty}(\mathbb{R}_{+};\dot{B}^{-1}_{\infty,\infty})\cap
\widetilde{L}^{1}(\mathbb{R}_{+};\dot{B}^{1}_{\infty,\infty})}\leq
C_{m}\varepsilon.
\end{align}
\end{lemma}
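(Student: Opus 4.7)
The plan is to reduce everything to frequency-localized heat kernel estimates supplied by Lemma \ref{lem2.5}, after first cashing in the standard embeddings $BMO^{-1}\hookrightarrow \dot{B}^{-1}_{\infty,\infty}$ and $BMO\hookrightarrow \dot{B}^{0}_{\infty,\infty}$. These embeddings give $2^{-q}\|\Delta_q u_0\|_{L^\infty}\lesssim \|u_0\|_{BMO^{-1}}\leq \varepsilon$ and $\|\Delta_q d_0\|_{L^\infty}\lesssim [d_0]_{BMO}\leq \varepsilon$ uniformly in $q\in\mathbb{Z}$.

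Next, I would freeze a dyadic block and write, using the fact that $\widetilde{\varphi}\varphi=\varphi$,
\begin{align*}
t^{\frac{m}{2}}\Delta_q\nabla^m e^{t\Delta}u_0 \;=\; i^{m}\,g_{3,q}(\cdot,t)\ast \Delta_q u_0,
\end{align*}
where $g_{3,q}$ is the kernel from Lemma \ref{lem2.5}. The $L^1$-estimate $\|g_{3,q}(\cdot,t)\|_{L^1}\leq C_m e^{-ct 2^{2q}}$ (which follows by integrating the pointwise bound $|g_{3,q}|\leq C_m \frac{2^{qn}}{1+|2^qx|^{2n}}e^{-ct2^{2q}}$) together with Young's inequality then yields the key pointwise-in-time estimate
\begin{align*}
\bigl\|t^{\frac{m}{2}}\Delta_q\nabla^m e^{t\Delta}u_0\bigr\|_{L^{\infty}} \;\leq\; C_m\,e^{-ct2^{2q}}\,\|\Delta_q u_0\|_{L^\infty}.
\end{align*}
To control the $\widetilde{L}^\infty(\dot{B}^{-1}_{\infty,\infty})$ piece I multiply by $2^{-q}$, use $e^{-ct2^{2q}}\leq 1$, and take the supremum in $t$ and $q$; the $BMO^{-1}$ embedding closes the estimate. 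For the $\widetilde{L}^1(\dot{B}^{1}_{\infty,\infty})$ piece I multiply by $2^q$ and integrate in $t$, using $\int_0^\infty e^{-ct2^{2q}}\,dt=c^{-1}2^{-2q}$, which again produces a factor $2^{-q}\|\Delta_q u_0\|_{L^\infty}$ uniform in $q$.

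The $d_0$ piece is handled in exactly the same way, with one extra spatial derivative. A direct Bernstein estimate on the frequency-localized heat kernel gives $\|\Delta_q \nabla^{m+1}e^{t\Delta}d_0\|_{L^\infty}\lesssim 2^{q(m+1)}e^{-ct2^{2q}}\|\Delta_q d_0\|_{L^\infty}$, so that
\begin{align*}
t^{\frac{m}{2}}\bigl\|\Delta_q \nabla^{m+1}e^{t\Delta}d_0\bigr\|_{L^\infty}\;\leq\; C_m\,2^{q}\,(t2^{2q})^{\frac{m}{2}}e^{-ct2^{2q}}\|\Delta_q d_0\|_{L^\infty}\;\leq\; C_m\,2^{q}\|\Delta_q d_0\|_{L^\infty},
\end{align*}
the last bound using that $s^{m/2}e^{-cs}$ is bounded on $\mathbb{R}_+$. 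The extra factor $2^q$ is exactly compensated by $d_0\in BMO\hookrightarrow \dot{B}^0_{\infty,\infty}$ being one derivative more regular than $u_0\in BMO^{-1}\hookrightarrow \dot{B}^{-1}_{\infty,\infty}$, so the same two computations as above give
$\|t^{m/2}\nabla^{m+1}e^{t\Delta}d_0\|_{\widetilde{L}^\infty(\dot{B}^{-1}_{\infty,\infty})\cap\widetilde{L}^1(\dot{B}^1_{\infty,\infty})}\leq C_m[d_0]_{BMO}$.

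The routine part is the time integration of $e^{-ct2^{2q}}$ and the algebra of scales. The only nontrivial step is the embedding $BMO^{-1}\hookrightarrow \dot{B}^{-1}_{\infty,\infty}$ (and $BMO\hookrightarrow \dot{B}^0_{\infty,\infty}$) which allows replacing the Koch--Tataru-type norms of the initial data by Besov norms amenable to dyadic analysis; this is where I expect the main (though still standard) technical care to be needed, since the definitions of $BMO^{-1}$ and $BMO$ used here are Carleson-measure based and the embedding must be recovered from those definitions.
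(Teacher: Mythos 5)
Your proof is correct, and it is in fact leaner than the paper's. The paper establishes the $\widetilde{L}^{\infty}(\mathbb{R}_{+};\dot{B}^{-1}_{\infty,\infty})$ bound by splitting time at $t=2^{-2q}$: for $t\geq 2^{-2q}$ it invokes the Du--Wang heat-flow estimates $\|t^{\frac{m}{2}}\nabla^{m}e^{t\Delta}u_{0}\|_{Z}\leq C_{m}\|u_{0}\|_{BMO^{-1}}$ and $\|t^{\frac{m}{2}}\nabla^{m}e^{t\Delta}d_{0}\|_{X}\leq C_{m}[d_{0}]_{BMO}$ (Lemma 2.2 of \cite{DW1}, recalled as \eqref{eq3.2}) together with $2^{-q}t^{-\frac{1}{2}}\leq 1$, and only for $t<2^{-2q}$ does it use the kernel $g_{3,q}$ of Lemma \ref{lem2.5} and the embeddings $BMO^{-1}\hookrightarrow\dot{B}^{-1}_{\infty,\infty}$, $BMO\hookrightarrow\dot{B}^{0}_{\infty,\infty}$; the $\widetilde{L}^{1}(\mathbb{R}_{+};\dot{B}^{1}_{\infty,\infty})$ bound is then obtained exactly as you do, by integrating $e^{-ct2^{2q}}$. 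Your observation that the Young/kernel bound $\|t^{\frac{m}{2}}\Delta_{q}\nabla^{m}e^{t\Delta}u_{0}\|_{L^{\infty}}\leq C_{m}e^{-ct2^{2q}}\|\Delta_{q}u_{0}\|_{L^{\infty}}$ is valid uniformly for all $t>0$ makes both the time splitting and the appeal to the Koch--Tataru-norm estimates of \cite{DW1} unnecessary for this lemma: your argument is self-contained given Lemma \ref{lem2.5} and the two embeddings (which the paper likewise uses without proof). What the paper's hybrid scheme buys is consistency with the treatment of the Duhamel terms in the proof of Theorem \ref{thm1.3}, where the splitting at $t=2^{-2q}$ is genuinely needed; for the free heat flow it is redundant.

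One point to tighten: in the $d_{0}$ part, your final displayed bound $t^{\frac{m}{2}}\|\Delta_{q}\nabla^{m+1}e^{t\Delta}d_{0}\|_{L^{\infty}}\leq C_{m}2^{q}\|\Delta_{q}d_{0}\|_{L^{\infty}}$ has discarded all decay in $t$, so it cannot be fed as such into the $\widetilde{L}^{1}(\mathbb{R}_{+};\dot{B}^{1}_{\infty,\infty})$ computation --- the time integral would diverge. You should instead keep half of the exponential, i.e.\ use $s^{\frac{m}{2}}e^{-cs}\leq C_{m}e^{-cs/2}$ to get $t^{\frac{m}{2}}\|\Delta_{q}\nabla^{m+1}e^{t\Delta}d_{0}\|_{L^{\infty}}\leq C_{m}2^{q}e^{-ct2^{2q}/2}\|\Delta_{q}d_{0}\|_{L^{\infty}}$, and then integrate in time as in the $u_{0}$ case, picking up the factor $2^{-2q}$ that cancels against $2^{q}\cdot 2^{q}$. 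Since your intermediate inequality already retains the factor $(t2^{2q})^{\frac{m}{2}}e^{-ct2^{2q}}$, this is a one-line repair of the write-up, not a gap in the idea.
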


\begin{proof}\label{proof of lem3.1}
At the beginning, we need to recall Lamma 2.2 in  \cite{DW1},  Du
and Wang proved that under the assumptions of Lemma \ref{lem3.1},
there holds
\begin{align}\label{eq3.2}
\|t^{\frac{m}{2}}\nabla^{m}e^{t\Delta} u_{0}\|_{Z}\leq C_{m}
\|u_{0}\|_{BMO^{-1}}\quad \text{ and }\quad \|
t^{\frac{m}{2}}\nabla^{m}e^{t\Delta} d_{0} \|_{ X}\leq
C_{m}[d_{0}]_{BMO},
\end{align}
where the norms $\|\cdot\|_{Z}$ and $\|\cdot\|_{X}$ defined as
Definition \ref{def1.1}.

In what follows, we are in a position to the proof of \eqref{eq3.1}.
Firstly, we prove that
\begin{align}\label{eq3.3}
\|t^{\frac{m}{2}}\nabla^{m}e^{t\Delta}
u_{0}\|_{\widetilde{L}^{\infty}(\mathbb{R}_{+};\dot{B}^{-1}_{\infty,\infty})\cap
\widetilde{L}^{1}(\mathbb{R}_{+};\dot{B}^{1}_{\infty,\infty})}\leq
C_{m}\|u_{0}\|_{BMO^{-1}}.
\end{align}
Notice that for all $t\in [2^{-2q},\infty)$, it follows that
\begin{align}\label{eq3.4}
\!2^{-q} \|\Delta_{q} t^{\frac{m}{2}}\nabla^{m} e^{t\Delta}
u_{0}\|_{L^{\!\infty}}\!\leq 2^{-q} \| t^{\frac{m}{2}}\nabla^{m}
e^{t\Delta} u_{0}\|_{L^{\!\infty}}\!\leq 2^{-q} t^{-\frac{1}{2}} \|
t^{\frac{m}{2}}\nabla^{m} e^{t\Delta} u_{0}\|_{Z} \leq C_{m}
\|u_{0}\|_{BMO^{-1}},
\end{align}
where we have used \eqref{eq3.2} in the last inequality. For $t\in
(0, 2^{-2q})$, by using the embedding $BMO^{-1}\hookrightarrow
\dot{B}^{-1}_{\infty,\infty}$ and Lemma \ref{lem2.5}, we have
\begin{align}\label{eq3.5}
\!2^{-q} \|\Delta_{q} t^{\frac{m}{2}}\nabla^{m} e^{t\Delta}
u_{0}\|_{L^{\!\infty}}=&2^{-q} \|g_{3,q}\ast \Delta_{q}
u_{0}\|_{L^{\infty}}\leq C_{m} 2^{-q} \|\Delta_{q}
u_{0}\|_{L^{\infty}}\nonumber\\
\leq& C_{m} \|u_{0}\|_{\dot{B}^{-1}_{\infty,\infty}}\leq
C_{m}\|u_{0}\|_{BMO^{-1}}.
\end{align}
On the other hand, again by $BMO^{-1}\hookrightarrow
\dot{B}^{-1}_{\infty,\infty}$ and Lemma \ref{lem2.5}, we get
\begin{align*}
2^{q} \|\Delta_{q} t^{\frac{m}{2}} \nabla^{m} e^{t\Delta}
u_{0}\|_{L^{1}(\mathbb{R}_{+};L^{\infty})}=&2^{q} \int_{0}^{\infty}
\|g_{3,q}\ast \Delta_{q} u_{0}\|_{L^{\infty}}\text{d}t \leq C_{m}
2^{q}\int_{0}^{\infty} e^{-ct 2^{2q}}\text{d}t \|\Delta_{q}
u_{0}\|_{L^{\infty}}\nonumber\\
 \leq& C_{m} 2^{-q} \|\Delta_{q}
u_{0}\|_{L^{\infty}} \leq
C_{m}\|u_{0}\|_{\dot{B}^{-1}_{\infty,\infty}}\leq
C_{m}\|u_{0}\|_{BMO^{-1}},
\end{align*}
which together with \eqref{eq3.4} and \eqref{eq3.5} implies that
\eqref{eq3.3}. In a similar way, we can prove the rest part of
\eqref{eq3.2}. In fact, for all $t\in [2^{-2q},\infty)$, we have
\begin{align}\label{eq3.6}
\!2^{-q} \|\Delta_{q} t^{\frac{m}{2}}\nabla^{m+1}\! e^{t\Delta}
d_{0}\|_{L^{\!\infty}}\!\leq\! 2^{-q} \|\nabla(
t^{\frac{m}{2}}\nabla^{m}\! e^{t\Delta}
d_{0})\|_{L^{\!\infty}}\!\leq \!2^{-q} t^{-\frac{1}{2}} \|
t^{\frac{m}{2}}\nabla^{m}\! e^{t\Delta} d_{0}\|_{X} \!\leq\! C_{m}
[d_{0}]_{BMO},
\end{align}
and for $t\in (0, 2^{-2q})$, by using the embedding
$BMO\hookrightarrow \dot{B}^{0}_{\infty,\infty}$ and Lemma
\ref{lem2.5}, we have
\begin{align}\label{eq3.7}
\!2^{-q} \|\Delta_{q} t^{\frac{m}{2}}\nabla^{m+1} e^{t\Delta}
d_{0}\|_{L^{\!\infty}}=&2^{-q} \|g_{3,q}\ast \Delta_{q}\nabla
d_{0}\|_{L^{\infty}}\leq C_{m} 2^{-q} \|\Delta_{q}
\nabla d_{0}\|_{L^{\infty}}\nonumber\\
\leq& C_{m} \|\Delta_{q} d_{0}\|_{L^{\infty}}\leq C_{m}
\|d_{0}\|_{\dot{B}^{0}_{\infty,\infty}}\leq C_{m}[d_{0}]_{BMO}.
\end{align}
Again by $BMO\hookrightarrow \dot{B}^{0}_{\infty,\infty}$ and Lemma
\ref{lem2.5}, we get
\begin{align}\label{eq3.8}
2^{q} \|\Delta_{q} t^{\frac{m}{2}} \nabla^{m+1} e^{t\Delta}
d_{0}\|_{L^{1}(\mathbb{R}_{+};L^{\infty})}=&2^{q} \int_{0}^{\infty}
\|g_{3,q}\ast \Delta_{q}\nabla d_{0}\|_{L^{\infty}}\text{d}t \leq
C_{m} 2^{q}\int_{0}^{\infty} e^{-ct 2^{2q}}\text{d}t
\|\Delta_{q}\nabla
d_{0}\|_{L^{\infty}}\nonumber\\
 \leq& C_{m} 2^{-q} \|\Delta_{q}\nabla
d_{0}\|_{L^{\infty}} \leq
C_{m}\|d_{0}\|_{\dot{B}^{0}_{\infty,\infty}}\leq C_{m}[d_{0}]_{BMO},
\end{align}
Combining \eqref{eq3.6}, \eqref{eq3.7} and \eqref{eq3.8} together,
it follows that
\begin{align*}
\|t^{\frac{m}{2}}\nabla^{m+1}e^{t\Delta}
d_{0}\|_{\widetilde{L}^{\infty}(\mathbb{R}_{+};\dot{B}^{-1}_{\infty,\infty})\cap
\widetilde{L}^{1}(\mathbb{R}_{+};\dot{B}^{1}_{\infty,\infty})}\leq
C_{m}[d_{0}]_{BMO},
\end{align*}
which together with \eqref{eq3.3} verifies \eqref{eq3.1}. This
completes the proof Lemma \ref{lem3.1}.
\end{proof}
\medskip

We are now in a position to the proof of Theorem \ref{thm1.3}. We
first rewrite the system \eqref{eq1.1}--\eqref{eq1.4} as an integral
system:
\begin{equation}\label{eq3.9}
\left\{
\begin{array}{l}
u:=e^{-t\Delta} u_{0}+\mathbb{T}_{1}(u,d)(x,t), \\
d:=e^{-t\Delta } d_{0}+\mathbb{T}_{2}(u,d)(x,t),
\end{array}
\right.
\end{equation}
with
\begin{equation*}
\left\{
\begin{array}{l}
\mathbb{T}_{1}(u,d)(x,t):=-\int_{0}^{t} e^{-(t-\tau)\Delta}
\mathbb{P}\nabla\cdot(u\otimes
u+\nabla d\odot\nabla d)(\cdot,\tau)\text{d}\tau, \\
\mathbb{T}_{2}(u,d)(x,t):=\int_{0}^{t}e^{-(t-\tau)\Delta} (|\nabla
d|^{2}d-u\cdot\nabla d)(\cdot,\tau)\text{d}t,
\end{array}
\right.
\end{equation*}
 where $\mathbb{P}:=I+\nabla(-\Delta)^{-1}\text{div}$ is the
Helmholtz-Weyl projection operator which has the matrix symbol with
components
\begin{align*}
(\widehat{\mathbb{P}}(\xi))_{j,k}=\delta_{jk}-\xi_{j}\xi_{k}|\xi|^{-2}\quad
\text{with } j,k =1,2,\cdots,n,
\end{align*}
where $\delta_{jk}$ is Kronecker symbol, and $\otimes$ denotes
tensor product.

In what follows, we shall divided the proof of the Theorem
\ref{thm1.3} into two steps.\medskip
\\
\textbf{Step 1.} Estimate \eqref{eq1.6} with the case of $k=0$ and
$m\geq 0$, i.e., we shall prove
\begin{align*}
\|t^{\frac{m}{2}}(\nabla^{m} u, \nabla^{m+1}
d)\|_{\widetilde{L}^{\infty}(\mathbb{R}_{+};\dot{B}^{-1}_{\infty,\infty})\cap
\widetilde{L}^{1}(\mathbb{R}_{+};\dot{B}^{1}_{\infty,\infty})}\leq
C_{m}\varepsilon.
\end{align*}
By using Lemma \ref{lem3.1}, to prove the above inequality,  it
sufficient to verify for any positive integers $m\geq 0$,
\begin{align}\label{eq3.10}
\|t^{\frac{m}{2}} (\nabla^{m} \mathbb{T}_{1}(u,d), \nabla^{m+1}
\mathbb{T}_{2}(u,d))(x,t)\|_{\widetilde{L}^{\infty}(\mathbb{R}_{+};\dot{B}^{-1}_{\infty,\infty})\cap
\widetilde{L}^{1}(\mathbb{R}_{+};\dot{B}^{1}_{\infty,\infty})}\leq
C_{m}\varepsilon.
\end{align}
We first give the estimate of $\mathbb{T}_{1}(u,d)$. For $t\in
[2^{-2q},\infty)$, by using \eqref{eq1.5} and Lemma \ref{lem2.5}, we
have
\begin{align}\label{eq3.11}
2^{-q}\|\Delta_{q}t^{\frac{m}{2}} \nabla^{m}
\mathbb{T}_{1}(u,d)(\cdot,t)\|_{L^{\infty}}\leq&
2^{-q}\|t^{\frac{m}{2}} \nabla^{m}
\mathbb{T}_{1}(u,d)(\cdot,t)\|_{L^{\infty}}\nonumber\\
 \leq&
2^{-q}\|t^{\frac{m}{2}} \nabla^{m} u(\cdot,t)\|_{L^{\infty}}+ 2^{-q}
\|t^{\frac{m}{2}} \nabla^{m} e^{t\Delta}
u_{0}\|_{L^{\infty}}\nonumber\\
\leq& 2^{-q} t^{-\frac{1}{2}} \|t^{\frac{m}{2}} \nabla^{m} u\|_{Z}
+C_{m}\|u_{0}\|_{BMO^{-1}}\nonumber\\
\leq& C_{m}(\varepsilon+\|u_{0}\|_{BMO^{-1}})\leq C_{m}\varepsilon,
\end{align}
where we have used \eqref{eq3.9} in the inequalities above. For
$t\in (0,2^{-2q})$, by denoting
\begin{align*}
\mathbf{F}(x,t):= (u\otimes u +\nabla d\odot \nabla d)(x,t),
\end{align*}
we can rewrite $\Delta_{q} \mathbb{T}_{1}(u,d)$ for any $q\in
\mathbb{Z}$ as
\begin{align*}
\Delta_{q} \mathbb{T}_{1}(u,d) (x,t) =&\int_{0}^{\frac{t}{2}}
\int_{|y|\geq 2^{1-q}} g_{q}(y,t-\tau)\cdot
\mathbf{F}(x-y,\tau)\text{d}y\text{d}\tau\nonumber\\
&+\int_{\frac{t}{2}}^{t} \int_{|y|\geq 2^{1-q}} g_{q}(y,t-\tau)\cdot
\mathbf{F}(x-y,\tau)\text{d}y\text{d}\tau\nonumber\\
&+ \int_{0}^{\frac{t}{2}} \int_{|y|\leq 2^{1-q}}g_{q}(y,t-\tau)\cdot
\mathbf{F}(x-y,\tau)\text{d}y\text{d}\tau\nonumber\\
&+ \int_{\frac{t}{2}}^{t} \int_{|y|\leq 2^{1-q}}g_{q}(y,t-\tau)\cdot
\mathbf{F}(x-y,\tau)\text{d}y\text{d}\tau\nonumber\\
:\triangleq&( F_{1,q}+F_{2,q}+F_{3,q}+F_{4,q})(x,t),
\end{align*}
where $(g_{q}\cdot \mathbf{F})^{i}= g_{q}^{i,j,k}\mathbf{F}^{j,k}$
and $g_{q}^{i,j,k}$ are given by Lemma \ref{lem2.5}. We shall
estimate term by term from $F_{1,q}$ to $F_{4,q}$. Indeed thanks to
\eqref{eq1.5} and Lemma \ref{lem2.5}, we have
\begin{align*}
|t^{\frac{m}{2}}\nabla^{m}F_{1,q}(x,t)|
=&\int_{0}^{\frac{t}{2}}\int_{|y|\geq 2^{1-q}}
(\frac{t}{t-\tau})^{\frac{m}{2}}(t-\tau)^{\frac{m}{2}} (\nabla^{m}
g_{q}) (y,t-\tau)\cdot
\mathbf{F}(x-y,\tau)\text{d}y\text{d}\tau\nonumber\\
\leq& C_{m}\!\! \int_{0}^{\frac{t}{2}} \!\!\int_{|y|\geq 2^{1-q}}\!
\frac{2^{q(n+1)}}{1+(2^{q}|y|)^{2n}} e^{-c(t-\tau)2^{2q}}
|\mathbf{F}(x-y,\tau)|\text{d}y\text{d}\tau\nonumber\\
\leq&C_{m}\!\! \sum_{\ell\in \mathbb{Z}^{n}\backslash\{0\}}
\int_{0}^{t} \int_{y\in
2^{-q}(\ell+[0,1]^{n})}\frac{2^{q(n+1)}}{1+(2^{q}|y|)^{2n}}
|\mathbf{F}(x-y,\tau)|\text{d}y\text{d}\tau\nonumber\\
\leq&C_{m}\!\! \sum_{\ell\in
\mathbb{Z}^{n}\backslash\{0\}}\!\!\frac{2^{q(n+1)}}{|\ell|^{2n}}\!\int_{0}^{2^{-2q}}\!
\int_{y\in 2^{-q}(\ell+[0,1]^{n})}\!\left(|u(x-y,\tau)|^{2}+|\nabla
d(x-y,\tau)|^{2}\right)\text{d}y\text{d}\tau\nonumber\\
\leq& C_{m} \sum_{\ell\in
\mathbb{Z}^{n}\backslash\{0\}}\!\!\frac{2^{q(n+1)}}{|\ell|^{2n}}2^{-q
n} \left(\|u\|_{Z}^{2}+\|d\|_{X}^{2}\right)\leq
C_{m}2^{q}\left(\|u\|_{Z}^{2}+\|d\|_{X}^{2}\right)\leq C_{m} 2^{q}
\varepsilon^{2}.
\end{align*}
Exactly following the same line, we have
\begin{align*}
&|t^{\frac{m}{2}}\nabla^{m}F_{2,q}(x,t)|
=\int_{\frac{t}{2}}^{t}\int_{|y|\geq 2^{1-q}}g_{q}(y,t-\tau)\cdot
t^{\frac{m}{2}} \nabla^{m}
\mathbf{F}(x-y,\tau)\text{d}y\text{d}\tau\nonumber\\
\leq&C_{m} \int_{\frac{t}{2}}^{t} \int_{|y|\geq 2^{1-q}}
\frac{2^{q(n+1)}}{1+(2^{q}y)^{2n}} e^{-c(t-\tau)2^{2q}}
\sum_{i=0}^{m} \left(|t^{\frac{i}{2}}\nabla^{i}
u(x-y,\tau)|^{2}+|t^{\frac{i}{2}}\nabla^{i} \nabla
d(x-y,\tau)|^{2}\right)\text{d}y\text{d}\tau\nonumber\\
\leq&C_{m}
\!\!\sum_{\ell\in\mathbb{Z}^{n}\backslash\{0\}}\!\int_{\frac{t}{2}}^{t}
\int_{y\in 2^{-q}(\ell+[0,1]^{n})}
\frac{2^{q(n+1)}}{1+(2^{q}y)^{2n}} \sum_{i=0}^{m}
\left(|\tau^{\frac{i}{2}}\nabla^{i}
u(x-y,\tau)|^{2}+|\tau^{\frac{i}{2}}\nabla^{i} \nabla
d(x-y,\tau)|^{2}\right)\text{d}y\text{d}\tau\nonumber\\
\leq& C_{m}\!\sum_{\ell\in\mathbb{Z}^{n}\backslash\{0\}}\!
\frac{2^{q(n+1)}}{|\ell|^{2n}} 2^{-qn}\sum_{i=1}^{m}
\left(\|\tau^{\frac{i}{2}}\nabla^{i}u\|_{Z}^{2}+\|\tau^{\frac{i}{2}}\nabla^{i}
d\|_{X}^{2}\right)\leq C_{m} 2^{q} \varepsilon^{2};\nonumber\\
&|t^{\frac{m}{2}}\nabla^{m}F_{3,q}(x,t)|
=\int_{0}^{\frac{t}{2}}\int_{|y|\leq
2^{1-q}}\left(\frac{t}{t-\tau}\right)^{\frac{m}{2}}(t-\tau)^{\frac{m}{2}}
(\nabla^{m} g_{q})(y,t-\tau)\cdot
\mathbf{F}(x-y,\tau)\text{d}y\text{d}\tau\nonumber\\
\leq& C_{m}\int_{0}^{t} \int_{|y|\leq
2^{1-q}}\frac{2^{q(n+1)}}{1+(2^{q} |y|)^{2n}} e^{-c(t-\tau)2^{2q}}
|\mathbf{F}(x-y,\tau)|\text{d}y\text{d}\tau\nonumber\\
\leq& C_{m} 2^{q(n+1)} \int_{0}^{t} \int_{|z-x|\leq 2^{1-q}}
|\mathbf{F}(z,\tau)|\text{d}z\text{d}\tau\nonumber\\
\leq&C_{m} 2^{q(n+1)} \int_{0}^{2^{-2q}} \int_{|z-x|\leq 2^{1-q}}
\left(|u(z,\tau)|^{2}+|\nabla d(z,\tau)|^{2}\right)\text{d}z\text{d}\tau\nonumber\\
\leq&C_{m} 2^{q(n+1)} 2^{-qn}
\left(\|u\|_{Z}^{2}+\|d\|_{X}^{2}\right)\leq C_{m}
2^{q}\varepsilon^{2}.
\end{align*}
and
\begin{align*}
&|t^{\frac{m}{2}}\nabla^{m}F_{4,q}(x,t)|
=\int_{\frac{t}{2}}^{t}\int_{|y|\leq 2^{1-q}} g_{q}(y,t-\tau)\cdot
t^{\frac{m}{2}}\nabla^{m}
\mathbf{F}(x-y,\tau)\text{d}y\text{d}\tau\nonumber\\
\leq& C_{m}\int_{\frac{t}{2}}^{t} \int_{|y|\leq
2^{1-q}}\frac{2^{q(n+1)}}{1+(2^{q} |y|)^{2n}} e^{-c(t-\tau)2^{2q}}
\sum_{i=0}^{m} \left(|t^{\frac{i}{2}}\nabla^{i}
u(x-y,\tau)|^{2}+|t^{\frac{i}{2}}\nabla^{i} \nabla
d(x-y,\tau)|^{2}\right)\text{d}y\text{d}\tau\nonumber\\
\leq& C_{m}\int_{\frac{t}{2}}^{t} \int_{|y|\leq
2^{1-q}}\frac{2^{q(n+1)}}{1+(2^{q} |y|)^{2n}} e^{-c(t-\tau)2^{2q}}
\sum_{i=0}^{m} \left(|\tau^{\frac{i}{2}}\nabla^{i}
u(x-y,\tau)|^{2}+|\tau^{\frac{i}{2}}\nabla^{i} \nabla
d(x-y,\tau)|^{2}\right)\text{d}y\text{d}\tau\nonumber\\
\leq& C_{m} 2^{q(n+1)} \sum_{i=0}^{m} \int_{0}^{t} \int_{|z-x|\leq
2^{1-q}} \left(|\tau^{\frac{i}{2}}\nabla^{i}
u(z,\tau)|^{2}+|\tau^{\frac{i}{2}}\nabla^{i} \nabla
d(z,\tau)|^{2}\right)\text{d}z\text{d}\tau\nonumber\\
\leq&C_{m} 2^{q(n+1)} \sum_{i=0}^{m}\int_{0}^{2^{-2q}}
\int_{|z-x|\leq 2^{1-q}}
\left(|\tau^{\frac{i}{2}}\nabla^{i} u(z,\tau)|^{2}+|\tau^{\frac{i}{2}}\nabla^{i} \nabla d(z,\tau)|^{2}\right)\text{d}z\text{d}\tau\nonumber\\
\leq&C_{m} 2^{q(n+1)} 2^{-qn}
\sum_{i=0}^{m}\left(\|\tau^{\frac{i}{2}}\nabla^{i}
u\|_{Z}^{2}+\|\tau^{\frac{i}{2}}\nabla^{i} d\|_{X}^{2}\right)\leq
C_{m} 2^{q}\varepsilon^{2}.
\end{align*}
As a consequence, if we select $\varepsilon>0$ small enough, we have
\begin{align*}
|t^{\frac{m}{2}}\nabla^{m}\Delta_{q} \mathbb{T}_{1}(u,d)(x,t)|\leq
C_{m} 2^{q} \varepsilon^{2}\leq C_{m} 2^{q} \varepsilon \quad\text{
for } t\in (0,2^{-q}),
\end{align*}
which together with \eqref{eq3.11} implies that
\begin{align}\label{eq3.12}
\|t^{\frac{m}{2}}\nabla^{m}\Delta_{q}
\mathbb{T}_{1}(u,d)\|_{\widetilde{L}^{\infty}(\mathbb{R}_{+};\dot{B}^{-1}_{\infty,\infty})}\leq
C_{m}\varepsilon.
\end{align}
Now let us turn to  estimate the
$\widetilde{L}^{1}(\mathbb{R}_{+};\dot{B}^{1}_{\infty,\infty})$-norm
of $t^{\frac{m}{2}}\nabla^{m}\mathbb{T}_{1}(u,d)$, by using
\eqref{eq3.9} and Lemma \ref{lem2.5} again, we have
\begin{align}\label{eq3.13}
&2^{q}\int_{0}^{2^{-2q}}\|t^{\frac{m}{2}}\nabla^{m}\Delta_{q}\mathbb{T}_{1}(u,d)(\cdot,t)\|_{L^{\infty}}\text{d}t
\nonumber\\
\leq&
2^{q}\int_{0}^{2^{-2q}}\|t^{\frac{m}{2}}\nabla^{m}u(\cdot,t)\|_{L^{\infty}}\text{d}t
+2^{q}\int_{0}^{2^{-2q}}\|t^{\frac{m}{2}}\nabla^{m}e^{t\Delta}
u_{0}\|_{L^{\infty}}\text{d}t\nonumber\\
\leq& 2^{q}\int_{0}^{2^{-2q}} t^{-\frac{1}{2}}
\text{d}t\|t^{\frac{m}{2}}\nabla^{m}u\|_{Z} + C_{m}
\|u_{0}\|_{BMO^{-1}}\leq C_{m}\varepsilon,
\end{align}
where we have used \eqref{eq1.5} in the last above inequality. To
complete the proof, we still need to estimate  the term $ 2^{q}
\int_{2^{-2q}}^{\infty}\left\|t^{\frac{m}{2}}\nabla^{m}\Delta_{q}
\mathbb{T}_{1}(u,d)(\cdot,t)\right\|_{L^{\infty}} \text{d}t $. In
order to do it,  for all $q\in\mathbb{Z}$, we split $\Delta_{q}
\mathbb{T}_{1}(u,d)$ as
\begin{align*}
\Delta_{q}
\mathbb{T}_{1}(u,d)(x,t)=&\int_{0}^{\frac{t}{2}}\Delta_{q}
e^{(t-\tau)\Delta} \mathbb{P}\nabla \cdot
\mathbf{F}(x,\tau)\text{d}\tau+\int_{\frac{t}{2}}^{t} \int_{|y|\geq
2t^{\frac{1}{2}}} g_{q}(y,t-\tau)\cdot
\mathbf{F}(x-y,\tau)\text{d}y\text{d}\tau\nonumber\\
&+\int_{\frac{t}{2}}^{t} \int_{|y|\leq 2t^{\frac{1}{2}}}
g_{1,q}(y,t-\tau)\nabla\cdot
\mathbf{F}(x-y,\tau)\text{d}y\text{d}\tau\nonumber\\
:=&
(\widetilde{F}_{1,q}+\widetilde{F}_{2,q}+\widetilde{F}_{3,q})(x,t).
\end{align*}
By applying \eqref{eq3.9} and Lemma \ref{lem2.5} that
\begin{align*}
&2^{q}\int_{2^{-2q}}^{\infty}\|t^{\frac{m}{2}}\nabla^{m}\widetilde{F}_{1,q}(\cdot,t)\|_{L^{\infty}}\text{d}t\nonumber\\
=&2^{q}\int_{2^{-2q}}^{\infty}\left\|t^{\frac{m}{2}}\nabla^{m}\Delta_{q}e^{\frac{t}{2}\Delta}\left(u(x,\frac{t}{2})
-(e^{\frac{t}{2}\Delta}u_{0})(x)\right)\right\|_{L^{\infty}}\text{d}t\nonumber\\
\leq& 2^{q} \int_{2^{-2q}}^{\infty}\left(\left\|
t^{\frac{m}{2}}\nabla^{m}\Delta_{q}e^{\frac{t}{2}\Delta}
u(x,\frac{t}{2})\right\|_{L^{\infty}}
+\left\|t^{\frac{m}{2}}\nabla^{m}\Delta_{q}\left(e^{t\Delta}u_{0}\right)(x)\right\|_{L^{\infty}}\right)\text{d}t
\nonumber\\
\leq&C_{m} 2^{q}\int_{2^{-2q}}^{\infty}\left(
\left\|\int_{\mathbb{R}^{n}} g_{3,q}(y,\frac{t}{2})\cdot
u(x-y,\frac{t}{2})\text{d}y\right\|_{L^{\infty}}+\left\|g_{3,q}\ast
\Delta_{q} u_{0}\right\|_{L^{\infty}}\right)\text{d}t\nonumber\\
\leq& C_{m}2^{q}\int_{2^{-2q}}^{\infty}\int_{\mathbb{R}^{n}}
\frac{2^{qn}}{1+(2^{q}y)^{2n}} e^{-ct2^{2q}}\text{d}y
\left(\left\|u(\cdot,\frac{t}{2})\right\|_{L^{\infty}}+\left\|\Delta_{q}
u_{0}\right\|_{L^{\infty}}\right)\text{d}t\nonumber\\
\leq&C_{m}2^{q}\int_{2^{-2q}}^{\infty}e^{-ct2^{2q}}
t^{-\frac{1}{2}}\text{d}t
\|u\|_{Z}+C_{m}2^{q}\int_{2^{-2q}}^{\infty}e^{-ct2^{2q}}\text{d}t
\|\Delta_{q} u_{0}\|_{L^{\infty}}\nonumber\\
\leq & C_{m}\|u\|_{Z}+C_{m} 2^{-q}\|\Delta_{q}
u_{0}\|_{L^{\infty}}\leq C_{m}
(\|u\|_{Z}+\|u_{0}\|_{\dot{B}^{-1}_{\infty,\infty}})\nonumber\\
\leq& C_{m}\varepsilon+C_{m}\|u_{0}\|_{BMO^{-1}}\leq
C_{m}\varepsilon,
\end{align*}
where we have used the estimate \eqref{eq1.5} and the embedding
$BMO^{-1}\hookrightarrow \dot{B}^{-1}_{\infty,\infty}$. Exactly
following the same line, we have
\begin{align*}
&2^{q}\int_{2^{-2q}}^{\infty}\|t^{\frac{m}{2}}\nabla^{m}\widetilde{F}_{2,q}(\cdot,t)\|_{L^{\infty}}\text{d}t\nonumber\\
\leq& C_{m} 2^{q} \int_{2^{-2q}}^{\infty}\left\|
\int_{\frac{t}{2}}^{t}\int_{|y|\geq
2t^{\frac{1}{2}}}\frac{2^{q(n+1)}}{1+(2^{q}|y|)^{2n}}e^{-c(t-\tau)2^{2q}}
|t^{\frac{m}{2}}\nabla^{m} \mathbf{F}
(x-y,\tau)|\text{d}y\text{d}\tau\right\|_{L^{\infty}}\text{d}t\nonumber\\
\leq& C_{\!m}\! 2^{q} \!\!\int_{2^{-2q}}^{\infty}\!\left\|
\int_{\frac{t}{2}}^{t}\!\int_{|y|\geq
2t^{\frac{1}{2}}}\!\frac{2^{q(n+1)}}{1\!+\!(2^{q}|y|)^{n}}\!e^{\!-c(t-\tau)2^{2q}}\!
\sum_{i=0}^{m}\!\left( |t^{\frac{i}{2}}\nabla^{i}
u(x-y,\tau)|^{2}\!+|t^{\frac{i}{2}}\nabla^{i}\nabla d(x-y,\tau)|^{2}
\right)\text{d}y\text{d}\tau\right\|_{\!L^{\!\infty}}\!\text{d}t\nonumber\\
\leq& C_{\!m}\!2^{\!q}\!\!\sum_{\ell\in \mathbb{Z}^{n}\!\backslash
\{0\}}\!\!\int_{\!2^{-2q}}^{\infty}\!\left\|
\int_{\!\frac{t}{2}}^{t}\!\int_{\!
|y|\in\sqrt{t}(\ell+[0,1]^{n})}\!\frac{2^{q(n+1)}}{1\!+\!(2^{q}|y|)^{2n}}\!
\sum_{i=0}^{m}\!\left(\! |\tau^{\frac{i}{2}}\nabla^{i}
u(x\!-\!y,\tau)|^{2}\!+|\tau^{\frac{i}{2}}\nabla^{i}\nabla
d(x\!-\!y,\tau)|^{2}
\right)\!\text{d}y\text{d}\tau\right\|_{\!L^{\!\infty}}\!\!\text{d}t\nonumber\\
\leq & C_{\!m}\!2^{\!q}\!\!\sum_{\ell\in \mathbb{Z}^{n}\!\backslash
\{0\}}\!\!\int_{\!2^{-2q}}^{\infty}\!\left\|\frac{2^{q(1-n)}}{(|\ell|\sqrt{t})^{2n}}\!
\int_{\!\frac{t}{2}}^{t}\!\int_{\!
|y|\in\sqrt{t}(\ell+[0,1]^{2n})}\! \sum_{i=0}^{m}\!\left(\!
|\tau^{\frac{i}{2}}\nabla^{i}
u(x\!-\!y,\tau)|^{2}\!+|\tau^{\frac{i}{2}}\nabla^{i}\nabla
d(x\!-\!y,\tau)|^{2}
\right)\!\text{d}y\text{d}\tau\right\|_{\!L^{\!\infty}}\!\!\text{d}t\nonumber\\
\leq & C_{m}2^{q}\sum_{\ell\in \mathbb{Z}^{n}\backslash
\{0\}}\!\int_{2^{-2q}}^{\infty}\left(\frac{2^{q(1-n)}}{(|\ell|\sqrt{t})^{2n}}\!
t^{\frac{n}{2}}\sum_{i=0}^{m}\left( \|\tau^{\frac{i}{2}}\nabla^{i}
u\|_{Z}^{2}+\|\tau^{\frac{i}{2}}\nabla^{i}d\|_{X}^{2}\right)\right)\text{d}t
\nonumber\\
\leq & C_{m}\sum_{\ell\in \mathbb{Z}^{n}\backslash
\{0\}}\frac{1}{|\ell|^{2n}}\sum_{i=0}^{m}\left(
\|\tau^{\frac{i}{2}}\nabla^{i}
u\|_{Z}^{2}+\|\tau^{\frac{i}{2}}\nabla^{i}d\|_{X}^{2}\right)\leq
C_{m}\varepsilon^{2},
\end{align*}
and
\begin{align*}
&2^{q}\int_{2^{-2q}}^{\infty}\|t^{\frac{m}{2}}\nabla^{m}\widetilde{F}_{3,q}(\cdot,t)\|_{L^{\infty}}\text{d}t\nonumber\\
\leq& C_{m} 2^{q} \int_{2^{-2q}}^{\infty}\left\|
\int_{\frac{t}{2}}^{t}\int_{|y|\leq
2t^{\frac{1}{2}}}\frac{2^{qn}}{1+(2^{q}|y|)^{2n}}e^{-c(t-\tau)2^{2q}}
|t^{\frac{m}{2}}\nabla^{m+1} \mathbf{F}
(x-y,\tau)|\text{d}y\text{d}\tau\right\|_{L^{\infty}}\text{d}t\nonumber\\
\leq& C_{\!m}\! 2^{q} \!\!\int_{2^{-2q}}^{\infty}\!\left\|
\int_{\frac{t}{2}}^{t}\!\!\int_{\!|y|\leq
2t^{\frac{1}{2}}}\!\!\frac{2^{qn}}{1\!+\!(2^{q}|y|)^{2n}}\!e^{\!-c(t-\tau)2^{2q}}\!\frac{1}{\sqrt{t}}\!
\sum_{i=0}^{m\!+\!1}\!\!\left( |t^{\frac{i}{2}}\nabla^{i}
u(x\!-\!y,\tau)|^{2}\!\!+|t^{\frac{i}{2}}\nabla^{i}\nabla
d(x\!-\!y,\tau)|^{2}
\right)\!\text{d}y\text{d}\tau\right\|_{\!L^{\!\infty}}\!\!\text{d}t\nonumber\\
\leq& C_{\!m}\! 2^{q} \!\!\int_{2^{-2q}}^{\infty}\!\left\|
\int_{\frac{t}{2}}^{t}\!\!\int_{\!|y|\leq
2t^{\frac{1}{2}}}\!\frac{2^{qn}}{1\!+\!(2^{q}|y|)^{2n}}\!e^{\!-c(t-\tau)2^{2q}}\!\frac{1}{\sqrt{t}}\!
\sum_{i=0}^{m\!+\!1}\!\!\left( |\tau^{\frac{i}{2}}\nabla^{i}
u(x\!-\!y,\tau)|^{2}\!\!+\!|\tau^{\frac{i}{2}}\nabla^{i}\nabla
d(x\!-\!y,\tau)|^{2}
\right)\!\text{d}y\text{d}\tau\right\|_{\!L^{\!\infty}}\!\!\text{d}t\nonumber\\
\leq& C_{\!m}\! 2^{q} \!\!\int_{2^{-2q}}^{\infty}\!\left\|
\int_{\frac{t}{2}}^{t}\!\int_{|y|\leq
2t^{\frac{1}{2}}}\!\frac{2^{qn}}{1\!+\!(2^{q}|y|)^{2n}}\!e^{\!-c(t-\tau)2^{2q}}\!
\tau^{-\frac{3}{2}}\sum_{i=0}^{m+1}\!\left(
\|t^{\frac{i}{2}}\nabla^{i}
u\|_{Z}^{2}\!+\|t^{\frac{i}{2}}\nabla^{i}d\|_{X}^{2}
\right)\text{d}y\text{d}\tau\right\|_{\!L^{\!\infty}}\!\text{d}t\nonumber\\
\leq& C_{m} \varepsilon^{2}
2^{q}\int_{2^{-2q}}^{\infty}\int_{\frac{t}{2}}^{t}
e^{-c(t-\tau)2^{2q}} \tau^{-\frac{3}{2}}\text{d}\tau\text{d}t\leq
C_{m}\varepsilon^{2}.
\end{align*}
Therefore, we get
\begin{align*}
2^{q}\int_{2^{-2q}}^{\infty}\|t^{\frac{m}{2}}\nabla^{m}\Delta_{q}\mathbb{T}_{1}(u,d)(\cdot,t)\|_{L^{\infty}}\text{d}t\leq
C_{m}\varepsilon^{2},
\end{align*}
which along with \eqref{eq3.13} ensures that
\begin{align*}
\|t^{\frac{m}{2}}\nabla^{m}\Delta_{q}
\mathbb{T}_{1}(u,d)\|_{\widetilde{L}^{1}(\mathbb{R}_{+};\dot{B}^{1}_{\infty,\infty})}\leq
C_{m}\varepsilon(1+\varepsilon)\leq C_{m}\varepsilon,
\end{align*}
if we choose $\varepsilon$ small enough. The above inequality
together with \eqref{eq3.12} gives the needed estimates of $u$.

To complete the proof of \eqref{eq3.10}, it remains to prove that
for any positive integers $m\geq 0$, there holds
\begin{align}\label{eq3.14}
\|t^{\frac{m}{2}} \nabla^{m+1}
\mathbb{T}_{2}(u,d)(x,t)\|_{\widetilde{L}^{\infty}(\mathbb{R}_{+};\dot{B}^{-1}_{\infty,\infty})\cap
\widetilde{L}^{1}(\mathbb{R}_{+};\dot{B}^{1}_{\infty,\infty})}\leq
C_{m}\varepsilon.
\end{align}
 For $t\in
[2^{-2q},\infty)$, by using \eqref{eq3.9} and Lemma \ref{lem2.5}, we
have
\begin{align*}
2^{-q}\|\Delta_{q}t^{\frac{m}{2}} \nabla^{m+1}
\mathbb{T}_{2}(u,d)(\cdot,t)\|_{L^{\infty}}\leq&
2^{-q}\|t^{\frac{m}{2}} \nabla^{m+1}
\mathbb{T}_{2}(u,d)(\cdot,t)\|_{L^{\infty}}\nonumber\\
=& 2^{-q}\|t^{\frac{m}{2}} \nabla^{m} \nabla\left(
d(\cdot,t)-(e^{t\Delta} d_{0})(x)\right)\|_{L^{\infty}}\nonumber\\
 \leq&
2^{-q}\|t^{\frac{m}{2}} \nabla^{m} \nabla d(\cdot,t)\|_{L^{\infty}}+
2^{-q} \|t^{\frac{m}{2}} \nabla^{m+1} e^{t\Delta}
d_{0}\|_{L^{\infty}}\nonumber\\
\leq& 2^{-q} t^{-\frac{1}{2}} \|t^{\frac{m}{2}} \nabla^{m} d\|_{X}
+C_{m}[d_{0}]_{BMO}\nonumber\\
\leq& C_{m}[d_{0}]_{BMO}\leq C_{m}\varepsilon,
\end{align*}
where we have used \eqref{eq1.5} in the last inequality above. For
$t\in (0,2^{-2q})$, by denoting
\begin{align*}
\mathbf{G}(x,t):= (-u\cdot \nabla d +|\nabla d|^{2}d)(x,t),
\end{align*}
we can rewrite $\Delta_{q} \mathbb{T}_{2}(u,d)$ for any $q\in
\mathbb{Z}$ as
\begin{align*}
\Delta_{q} \mathbb{T}_{2}(u,d) (x,t) =&\int_{0}^{\frac{t}{2}}
\int_{|y|\geq 2^{1-q}} g_{2,q}(y,t-\tau)
\mathbf{G}(x-y,\tau)\text{d}y\text{d}\tau\nonumber\\
&+\int_{\frac{t}{2}}^{t} \int_{|y|\geq 2^{1-q}} g_{2,q}(y,t-\tau)
\mathbf{G}(x-y,\tau)\text{d}y\text{d}\tau\nonumber\\
&+ \int_{0}^{\frac{t}{2}} \int_{|y|\leq 2^{1-q}}g_{2,q}(y,t-\tau)
\mathbf{G}(x-y,\tau)\text{d}y\text{d}\tau\nonumber\\
&+ \int_{\frac{t}{2}}^{t} \int_{|y|\leq 2^{1-q}}g_{2,q}(y,t-\tau)
\mathbf{G}(x-y,\tau)\text{d}y\text{d}\tau\nonumber\\
:\triangleq&( G_{1,q}+G_{2,q}+G_{3,q}+G_{4,q})(x,t),
\end{align*}
where  $g_{2,q}$ is given by Lemma \ref{lem2.5}. We shall estimate
term by term from $G_{1,q}$ to $G_{3,q}$. Indeed thanks to
\eqref{eq1.5} and Lemma \ref{lem2.5}, we have
\begin{align*}
|t^{\frac{m}{2}}\nabla^{m+1}G_{1,q}(x,t)|
=&\int_{0}^{\frac{t}{2}}\int_{|y|\geq 2^{1-q}}
\left(\frac{t}{t-\tau}\right)^{\frac{m}{2}}(t-\tau)^{\frac{m}{2}}
(\nabla^{m+1} g_{2,q}) (y,t-\tau)
\mathbf{G}(x-y,\tau)\text{d}y\text{d}\tau\nonumber\\
\leq& C_{m}\!\! \int_{0}^{\frac{t}{2}} \!\!\int_{|y|\geq 2^{1-q}}\!
\frac{2^{qn}}{1+(2^{q}|y|)^{2n}} e^{-c(t-\tau)2^{2q}}
(t-\tau)^{-\frac{1}{2}}
|\mathbf{G}(x-y,\tau)|\text{d}y\text{d}\tau\nonumber\\
\leq& C_{m}\!\! \int_{0}^{\frac{t}{2}} \!\!\int_{|y|\geq 2^{1-q}}\!
\frac{2^{qn}}{1+(2^{q}|y|)^{2n}} e^{c\tau2^{2q}} \tau^{-\frac{1}{2}}
|\mathbf{G}(x-y,\tau)|\text{d}y\text{d}\tau\nonumber\\
\leq&C_{m}\!\! \sum_{\ell\in \mathbb{Z}^{n}\backslash\{0\}}
\int_{0}^{t} \int_{y\in
2^{-q}(\ell+[0,1]^{n})}\frac{2^{q(n+1)}}{1+(2^{q}|y|)^{2n}}
|\mathbf{G}(x-y,\tau)|\text{d}y\text{d}\tau\nonumber\\
\leq&C_{m}\!\! \sum_{\ell\in
\mathbb{Z}^{n}\backslash\{0\}}\!\!\frac{2^{qn}}{|\ell|^{2n}}\!2^{q}\int_{0}^{2^{-2q}}\!
\int_{y\in 2^{-q}(\ell+[0,1]^{n})}\!\left(|u(x-y,\tau)|^{2}+|\nabla
d(x-y,\tau)|^{2}\right)\text{d}y\text{d}\tau\nonumber\\
\leq& C_{m}2^{q } \left(\|u\|_{Z}^{2}+\|d\|_{X}^{2}\right)\leq C_{m}
2^{q} \varepsilon^{2},
\end{align*}
where we have used the fact that $|d|=1$ and $e^{c\tau2^{2q}}
\tau^{-\frac{1}{2}}\leq C 2^{q}$ for all $0<\tau<\infty$. Exactly
following the same line, we have
\begin{align*}
&|t^{\frac{m}{2}}\nabla^{m+1}G_{2,q}(x,t)|\nonumber\\
= &\int_{\frac{t}{2}}^{t}\int_{|y|\geq
2^{1-q}}g_{2,q}(y,t-\tau)t^{\frac{m}{2}} \nabla^{m+1}
\mathbf{G}(x-y,\tau)\text{d}y\text{d}\tau\nonumber\\
\leq&\!C_{\!m}\! \int_{\!\frac{t}{2}}^{t}\!\! \int_{\!|y|\geq
2^{1\!-\!q}}\!\! \frac{2^{qn}}{1\!+\!(2^{q}y)^{2n}}
e^{-c(t\!-\!\tau)2^{2q}}\!\frac{1}{\sqrt{t}} \!\sum_{i=0}^{m+1}\!\!
\Big(|t^{\frac{i}{2}}\nabla^{i}
u(x\!-\!y,\tau)|^{2}\!\!+\!|t^{\frac{i}{2}}\nabla^{i} \nabla
d(x\!-\!y,\tau)|^{2}|t^{\frac{i}{2}}\nabla^{i}
d(x\!-\!y,\tau)|\Big)\!\text{d}y\text{d}\tau\nonumber\\
\leq&\!C_{\!m}\! \int_{\!\frac{t}{2}}^{t}\!\! \int_{\!|y|\geq
2^{1\!-\!q}}\!\! \frac{2^{q(n+1)}}{1\!+\!(2^{q}y)^{2n}}
\!\sum_{i=0}^{m+1}\!\! \Big(|\tau^{\frac{i}{2}}\nabla^{i}
u(x\!-\!y,\tau)|^{2}\!\!+\!|\tau^{\frac{i}{2}}\nabla^{i} \nabla
d(x\!-\!y,\tau)|^{2}|\tau^{\frac{i}{2}}\nabla^{i}
d(x\!-\!y,\tau)|\Big)\!\text{d}y\text{d}\tau\nonumber\\
\leq&C_{m} \!\!\sum_{\ell\in\mathbb{Z}^{n}\backslash
\{0\}}\!\!\!\frac{2^{q(n\!+\!1\!)\!}}{|\ell|^{2n}}\!
\!\left(\!1\!+\!\!\sum_{i=1}^{m\!+\!1}\!\|t^{\frac{i}{2}}\!\nabla^{i}\!
d(\cdot,t)\|_{\!L^{\!\infty}}\!\!\right)\!\int_{0}^{2^{-\!2q}}\!\!\!\!
\int_{\!y\in 2^{\!-\!q}(\ell\!+\![0,1]^{n}\!)}
\!\!\sum_{i=0}^{m+1}\! \Big(|\tau^{\frac{i}{2}}\!\nabla^{i}
u(x\!-\!y,\tau)|^{2}\!\!+\!|\tau^{\frac{i}{2}}\!\nabla^{i} \nabla
d(x\!-\!y,\tau)|^{2}\!\Big)\!\text{d}y\text{d}\tau\nonumber\\
\leq& C_{m}
2^{q(n+1)}\left(1+\sum_{i=0}^{m}\|t^{\frac{i}{2}}\nabla^{i}
d\|_{X}\right)\cdot 2^{-qn}\sum_{i=0}^{m+1}
\left(\|\tau^{\frac{i}{2}}\nabla^{i}u\|_{Z}^{2}+\|\tau^{\frac{i}{2}}\nabla^{i}
d\|_{X}^{2}\right)\nonumber\\
\leq& C_{m} 2^{q} \varepsilon^{2}(1+\varepsilon);\nonumber\\
&|t^{\frac{m}{2}}\nabla^{m+1}G_{3,q}(x,t)|\nonumber\\
=&\int_{0}^{\frac{t}{2}}\int_{|y|\leq
2^{1-q}}\left(\frac{t}{t-\tau}\right)^{\frac{m}{2}}(t-\tau)^{\frac{m}{2}}
(\nabla^{m+1} g_{2,q})(y,t-\tau)
\mathbf{G}(x-y,\tau)\text{d}y\text{d}\tau\nonumber\\
\leq& C_{m}\int_{0}^{\frac{t}{2}} \int_{|y|\leq
2^{1-q}}\frac{2^{qn}}{1+(2^{q} |y|)^{2n}} e^{-c(t-\tau)2^{2q}}
\frac{1}{\sqrt{t-\tau}}
|\mathbf{G}(x-y,\tau)|\text{d}y\text{d}\tau\nonumber\\
\leq& C_{m}\int_{0}^{\frac{t}{2}} \int_{|y|\leq
2^{1-q}}\frac{2^{q(n+1)}}{1+(2^{q} |y|)^{2n}}
|\mathbf{G}(x-y,\tau)|\text{d}y\text{d}\tau\nonumber\\
\leq& C_{m} 2^{q(n+1)} \int_{0}^{t} \int_{|z-x|\leq 2^{1-q}}
|\mathbf{G}(z,\tau)|\text{d}z\text{d}\tau\nonumber\\
\leq&C_{m} 2^{q(n+1)} \int_{0}^{2^{-2q}} \int_{|z-x|\leq 2^{1-q}}
\left(|u(z,\tau)|^{2}+|\nabla d(z,\tau)|^{2}\right)\text{d}z\text{d}\tau\nonumber\\
\leq&C_{m} 2^{q(n+1)} 2^{-qn}
\left(\|u\|_{Z}^{2}+\|d\|_{X}^{2}\right)\leq C_{m}
2^{q}\varepsilon^{2},
\end{align*}
and
\begin{align*}
&|t^{\frac{m}{2}}\nabla^{m+1}G_{4,q}(x,t)|
=\int_{\frac{t}{2}}^{t}\int_{|y|\leq 2^{1-q}}  g_{2,q}(y,t-\tau)
t^{\frac{m}{2}} \nabla^{m+1}\mathbf{G}(x-y,\tau)\text{d}y\text{d}\tau\nonumber\\
\leq&\!C_{\!m}\!\! \int_{\!\frac{t}{2}}^{t}\!\! \int_{\!|y|\leq
2^{1\!- \!q}} \!\frac{2^{qn}}{1\!+\!(2^{q}y\!)^{2n}}
e^{-c(t\!-\!\tau)2^{2q}}\!\!\frac{1}{\sqrt{t}} \!\sum_{i=0}^{m+1}\!
\Big(|t^{\frac{i}{2}}\!\nabla^{i}
u(x\!-\!y,\tau)|^{2}\!\!+\!|t^{\frac{i}{2}}\!\nabla^{i} \nabla
d(x\!-\!y,\tau)|^{2}|t^{\frac{i}{2}}\!\nabla^{i}
d(x\!-\!y,\tau)|\!\Big)\!\text{d}y\text{d}\tau\nonumber\\
\leq&C_{m} \int_{\frac{t}{2}}^{t} \int_{|y|\leq 2^{1-q}}
\frac{2^{q(n+1)}}{1+(2^{q}y)^{2n}}
 \!\sum_{i=0}^{m+1}\!
\Big(|\tau^{\frac{i}{2}}\nabla^{i} u(x\!-\!y,\tau)|^{2}\!\!
+\!|\tau^{\frac{i}{2}}\nabla^{i} \nabla
d(x\!-\!y,\tau)|^{2}|\tau^{\frac{i}{2}}\nabla^{i}
d(x\!-\!y,\tau)|\Big)\!\text{d}y\text{d}\tau\nonumber\\
\leq&C_{m} 2^{q(n+1)} \!\sum_{i=0}^{m+1}\!\int_{0}^{\!2^{-2q}}\!\!\!
\int_{|x-z|\leq 2^{1-q}} \!\!\left(|\tau^{\frac{i}{2}}\nabla^{i}
u(z,\tau)|^{2}\!\!+\!|\tau^{\frac{i}{2}}\nabla^{i} \nabla
d(z,\tau)|^{2}\!\right)\!\text{d}z\text{d}\tau\cdot
\left(1+\!\sum_{i=1}^{m+1}\|t^{\frac{i}{2}}\nabla^{i}d(\cdot,t)\|_{L^{\infty}}\right)\nonumber\\
\leq&C_{m}
2^{q}\left(1+\!\sum_{i=0}^{m}\|t^{\frac{i}{2}}\nabla^{i}d\|_{Z}\right)\sum_{i=0}^{m+1}
\left(\|\tau^{\frac{i}{2}}\nabla^{i}u\|_{Z}^{2}+\|\tau^{\frac{i}{2}}\nabla^{i}
d\|_{X}^{2}\right)\leq C_{m} 2^{q} \varepsilon^{2}(1+\varepsilon),
\end{align*}
where we have used the fact that $|d|=1$ in the above inequalities.
 As a consequence, we obtain
\begin{align}\label{eq3.15}
\|t^{\frac{m}{2}} \nabla^{m+1}
\mathbb{T}_{2}(u,d)(x,t)\|_{\widetilde{L}^{\infty}(\mathbb{R}_{+};\dot{B}^{-1}_{\infty,\infty})}\leq
C_{m}\varepsilon(1+\varepsilon(1+\varepsilon))\leq C_{m}\varepsilon,
\end{align}
if we selecting $\varepsilon$ small enough. On the other hand, we
have
\begin{align}\label{eq3.16}
&2^{q}\int_{0}^{2^{-2q}}\|t^{\frac{m}{2}}\nabla^{m+1}\Delta_{q}\mathbb{T}_{2}(u,d)(\cdot,t)\|_{L^{\infty}}\text{d}t
\nonumber\\
\leq& 2^{q}\int_{0}^{2^{-2q}}\|t^{\frac{m}{2}}\nabla^{m}\nabla
d(\cdot,t)\|_{L^{\infty}}\text{d}t
+2^{q}\int_{0}^{2^{-2q}}\|t^{\frac{m}{2}}\nabla^{m+1}e^{t\Delta}
d_{0}\|_{L^{\infty}}\text{d}t\nonumber\\
\leq& 2^{q}\int_{0}^{2^{-2q}} t^{-\frac{1}{2}}
\text{d}t\|t^{\frac{m}{2}}\nabla^{m}d\|_{X} + C_{m}
[d_{0}]_{BMO}\leq C_{m}\varepsilon,
\end{align}
Now let us turn to the estimate of $\int_{2^{-2q}}^{\infty}
\|t^{\frac{m}{2}}\nabla^{m+1}\Delta_{q}\mathbb{T}_{2}(u,d)(\cdot,t)\|_{L^{\infty}}\text{d}t$.
We split $\Delta_{q}\mathbb{T}_{2}(u,d)$ for all $q\in\mathbb{Z}$ as
\begin{align*}
\Delta_{q}\mathbb{T}_{1}(u,d)(x,t)=&\int_{0}^{\frac{t}{2}}
\Delta_{q} e^{(t-\tau)\Delta}
\mathbf{G}(x,\tau)\text{d}\tau+\int_{\frac{t}{2}}^{t}\int_{|y|\geq
2\sqrt{t}}
g_{2,q}(y,t-\tau)\mathbf{G}(x-y,\tau)\text{d}y\text{d}\tau\nonumber\\
&+\int_{\frac{t}{2}}^{t}\int_{|y|\leq 2\sqrt{t}}
g_{2,q}(y,t-\tau)\mathbf{G}(x-y,\tau)\text{d}y\text{d}\tau\nonumber\\
:\triangleq&
(\widetilde{G}_{1,q}+\widetilde{G}_{2,q}+\widetilde{G}_{3,q})(x,t).
\end{align*}
We  shall estimate term by term from $\widetilde{G}_{1,q}$ to
$\widetilde{G}_{3,q}$. Indeed, thanks to \eqref{eq3.9} and Lemma
\ref{lem2.5}, we have
\begin{align*}
&2^{q}\int_{2^{-2q}}^{\infty}\|t^{\frac{m}{2}}\nabla^{m+1}\widetilde{G}_{1,q}(\cdot,t)\|_{L^{\infty}}\text{d}t\nonumber\\
=&2^{q}\int_{2^{-2q}}^{\infty}\left\|t^{\frac{m}{2}}\nabla^{m+1}\Delta_{q}e^{\frac{t}{2}\Delta}\left(d(x,\frac{t}{2})
-(e^{\frac{t}{2}\Delta}d_{0})(x)\right)\right\|_{L^{\infty}}\text{d}t\nonumber\\
\leq& 2^{q} \int_{2^{-2q}}^{\infty}\left(\left\|
t^{\frac{m}{2}}\nabla^{m+1}\Delta_{q}e^{\frac{t}{2}\Delta}
d(x,\frac{t}{2})\right\|_{L^{\infty}}
+\left\|t^{\frac{m}{2}}\nabla^{m+1}\Delta_{q}\left(e^{t\Delta}d_{0}\right)(x)\right\|_{L^{\infty}}\right)\text{d}t
\nonumber\\
\leq&C_{m} 2^{q}\int_{2^{-2q}}^{\infty}\left(
\left\|\int_{\mathbb{R}^{n}} g_{3,q}(y,\frac{t}{2})\cdot \nabla
d(x-y,\frac{t}{2})\text{d}y\right\|_{L^{\infty}}+\left\|g_{3,q}\ast
\Delta_{q} \nabla d_{0}\right\|_{L^{\infty}}\right)\text{d}t\nonumber\\
\leq& C_{m}2^{q}\int_{2^{-2q}}^{\infty}\int_{\mathbb{R}^{n}}
\frac{2^{qn}}{1+(2^{q}y)^{2n}} e^{-ct2^{2q}}\text{d}y
\left(\left\|\nabla
d(\cdot,\frac{t}{2})\right\|_{L^{\infty}}+2^{q}\left\|\Delta_{q}
d_{0}\right\|_{L^{\infty}}\right)\text{d}t\nonumber\\
\leq&C_{m}2^{q}\int_{2^{-2q}}^{\infty}e^{-ct2^{2q}}
t^{-\frac{1}{2}}\text{d}t
\|d\|_{X}+C_{m}2^{2q}\int_{2^{-2q}}^{\infty}e^{-ct2^{2q}}\text{d}t
\|\Delta_{q} d_{0}\|_{L^{\infty}}\nonumber\\
\leq & C_{m}\|d\|_{X}+C_{m} \|\Delta_{q} d_{0}\|_{L^{\infty}}\leq
C_{m}
(\|d\|_{X}+\|d_{0}\|_{\dot{B}^{0}_{\infty,\infty}})\nonumber\\
\leq& C_{m}\varepsilon+C_{m}[u_{0}]_{BMO}\leq C_{m}\varepsilon,
\end{align*}
where we have used the estimate \eqref{eq1.5} and the embedding
$BMO\hookrightarrow \dot{B}^{0}_{\infty,\infty}$. Exactly following
the same line, we have
\begin{align*}
&2^{q}\int_{2^{-2q}}^{\infty}\|t^{\frac{m}{2}}\nabla^{m+1}\widetilde{G}_{2,q}(\cdot,t)\|_{L^{\infty}}\text{d}t\nonumber\\
\leq& C_{m} 2^{q} \int_{2^{-2q}}^{\infty}\Big\|
\int_{\frac{t}{2}}^{t}\int_{|y|\geq
2t^{\frac{1}{2}}}\frac{2^{qn}}{1+(2^{q}|y|)^{2n}}e^{-c(t-\tau)2^{2q}}
|t^{\frac{m}{2}}\nabla^{m+1} \mathbf{G}
(x-y,\tau)|\text{d}y\text{d}\tau\Big\|_{L^{\infty}}\text{d}t\nonumber\\
\leq& \!C_{\!m}\! 2^{q} \!\!\!\int_{2^{-\!2q}}^{\infty}\!\Big\|
\int_{\!\frac{t}{2}}^{t}\!\int_{\!|y|\geq
2t^{\frac{1}{2}}}\!\frac{2^{qn}}{1\!+\!(2^{q}|y|\!)^{2n}}\!\frac{1}{\sqrt{t}}\!
\sum_{i=0}^{m\!+\!1}\!\left( |t^{\frac{i}{2}}\nabla^{i}
u(x\!-\!y\!,\tau)|^{2}\!\!+\!|t^{\frac{i}{2}}\!\nabla^{i}\nabla
d(x\!-\!y\!,\tau)|^{2}|t^{\frac{i}{2}}\!\nabla^{i}
d(x\!-\!y\!,\tau)|\!
\right)\!\text{d}y\text{d}\tau\Big\|_{\!L^{\!\infty}}\!\!\text{d}t\nonumber\\
\leq& C_{\!m}\!2^{\!q}\!\!\sum_{\ell\in \mathbb{Z}^{n}\!\backslash
\{0\}}\!\!\int_{\!2^{-2q}}^{\infty}\!\Big\|
\int_{\!\frac{t}{2}}^{t}\!\int_{\!
|y|\in\sqrt{t}(\ell+[0,1]^{n})}\!\frac{2^{qn}}{1\!+\!(2^{q}|y|)^{2n}}\!\frac{1}{\sqrt{t}}\nonumber\\
&\qquad \qquad \qquad \times\sum_{i=0}^{m\!+\!1}\left(
|\tau^{\frac{i}{2}}\nabla^{i}
u(x-y,\tau)|^{2}+|\tau^{\frac{i}{2}}\nabla^{i}\nabla
d(x-y,\tau)|^{2} |\tau^{\frac{i}{2}}\nabla^{i} d(x-y,\tau)|
\right)\!\text{d}y\text{d}\tau\Big\|_{\!L^{\!\infty}}\!\!\text{d}t\nonumber\\
\leq & C_{\!m}\!2^{\!q}\!\!\sum_{\ell\in \mathbb{Z}^{n}\!\backslash
\{0\}}\!\!\int_{\!2^{-2q}}^{\infty}\Big\|\frac{2^{-qn}}{(|\ell|\sqrt{t})^{2n}}\!\frac{1}{\sqrt{t}}
\left(1+\sum_{i=1}^{m+1} \|t^{\frac{i}{2}}\nabla^{i} d(\cdot,t)\|_{L^{\infty}}\right)\nonumber\\
&\qquad\qquad\qquad \times\int_{\!\frac{t}{2}}^{t}\int_{
|y|\in\sqrt{t}(\ell+[0,1]^{n})} \sum_{i=0}^{m+1}\left(\!
|\tau^{\frac{i}{2}}\nabla^{i}
u(x-y,\tau)|^{2}\!+|\tau^{\frac{i}{2}}\nabla^{i}\nabla
d(x-y,\tau)|^{2}
\right)\!\text{d}y\text{d}\tau\Big\|_{\!L^{\!\infty}}\!\!\text{d}t\nonumber\\
\leq & C_{m}2^{q}\sum_{\ell\in \mathbb{Z}^{n}\backslash
\{0\}}\!\int_{2^{-2q}}^{\infty}\left(\frac{2^{-qn}}{(|\ell|\sqrt{t})^{2n}}\!
t^{\frac{n-1}{2}}\left(1+\sum_{i=0}^{m} \|t^{\frac{i}{2}}\nabla^{i}
d\|_{X}\right)\sum_{i=0}^{m+1}\left( \|\tau^{\frac{i}{2}}\nabla^{i}
u\|_{Z}^{2}+\|\tau^{\frac{i}{2}}\nabla^{i}d\|_{X}^{2}\right)\right)\text{d}t
\nonumber\\
\leq & C_{m}\sum_{\ell\in \mathbb{Z}^{n}\backslash
\{0\}}\frac{1}{|\ell|^{2n}}\left(1+\sum_{i=0}^{m}
\|t^{\frac{i}{2}}\nabla^{i} d\|_{X}\right)\sum_{i=0}^{m+1}\left(
\|\tau^{\frac{i}{2}}\nabla^{i}
u\|_{Z}^{2}+\|\tau^{\frac{i}{2}}\nabla^{i}d\|_{X}^{2}\right) \leq
C_{m}\varepsilon^{2}(1+\varepsilon),
\end{align*}
and
\begin{align*}
&2^{q}\int_{2^{-2q}}^{\infty}\|t^{\frac{m}{2}}\nabla^{m+1}\widetilde{G}_{3,q}(\cdot,t)\|_{L^{\infty}}\text{d}t\nonumber\\
\leq& C_{m} 2^{q} \int_{2^{-2q}}^{\infty}\left\|
\int_{\frac{t}{2}}^{t}\int_{|y|\leq
2t^{\frac{1}{2}}}\frac{2^{qn}}{1+(2^{q}|y|)^{2n}}e^{-c(t-\tau)2^{2q}}
|t^{\frac{m}{2}}\nabla^{m+1} \mathbf{G}
(x-y,\tau)|\text{d}y\text{d}\tau\right\|_{L^{\infty}}\text{d}t\nonumber\\
\leq& C_{\!m}\! 2^{q} \!\!\int_{2^{-2q}}^{\infty}\!\left\|
\int_{\frac{t}{2}}^{t}\!\int_{|y|\leq
2t^{\frac{1}{2}}}\!\frac{2^{qn}}{1\!+\!(2^{q}|y|)^{2n}}\!\frac{1}{\sqrt{t}}\!
\sum_{i=0}^{m\!+\!1}\!\left( |t^{\frac{i}{2}}\!\nabla^{i}
u(x\!-\!y,\tau\!)|^{2}\!\!+\!|t^{\frac{i}{2}}\!\nabla^{i}\nabla
d(x\!-\!y,\tau\!)|^{2}|t^{\frac{i}{2}}\!\nabla^{i}
d(x\!-\!y,\tau\!)|\!
\right)\!\text{d}y\text{d}\tau\right\|_{\!L^{\!\infty}}\!\text{d}t\nonumber\\
\leq& C_{\!m}\! 2^{q} \!\!\int_{2^{-2q}}^{\infty}\!\!\left\|
\int_{\!\frac{t}{2}}^{t}\!\int_{\!|y|\leq
2t^{\frac{1}{2}}}\!\frac{2^{qn}}{1\!+\!(2^{q}|y|)^{2n}}\!\frac{1}{\sqrt{t}}\!
\sum_{i=0}^{m\!+\!1}\!\left( |\tau^{\frac{i}{2}}\!\nabla^{i}
u(x\!-\!y,\tau\!)|^{2}\!\!+\!|\tau^{\frac{i}{2}}\!\nabla^{i}\nabla
d(x\!-\!y,\tau\!)|^{2}|\tau^{\frac{i}{2}}\!\nabla^{i}
d(x\!-\!y,\tau\!)|\!
\right)\!\text{d}y\text{d}\tau\right\|_{\!L^{\!\infty}}\!\text{d}t\nonumber\\
\leq& C_{\!m}\!2^{\!q}\!\!\int_{\!2^{-2q}}^{\infty}\!\left\|
\int_{\!\frac{t}{2}}^{t}\!\int_{\! |y|\leq
2\!\sqrt{t}}\!\frac{2^{qn}}{1\!\!+\!\!(2^{q}|y|)^{2n}\!}\frac{1}{\sqrt{\tau}}\!\!\sum_{i=0}^{m\!+\!1}\!\left(\!1\!+\!\|\tau^{\frac{i}{2}}\!\nabla^{i}
\!d(\!\cdot,\!\tau\!)\|_{\!L^{\!\infty}}\!\right)\!\left(\!
|\tau^{\frac{i}{2}}\nabla^{i}
u(x\!\!-\!\!y,\!\tau\!)|^{2}\!\!+\!|\tau^{\frac{i}{2}}\nabla^{i}\nabla
d(x\!\!-\!\!y,\!\tau\!)|^{2}
\right)\!\text{d}y\text{d}\tau\right\|_{\!L^{\!\infty}}\!\!\text{d}t\nonumber\\
\leq& C_{\!m}\!2^{\!q}\!\int_{\!2^{-2q}}^{\infty}\!\left\|
\int_{\!\frac{t}{2}}^{t}\!\int_{\! |y|\leq
2\sqrt{t}}\!\frac{2^{qn}}{1\!+\!(2^{q}|y|)^{2n}}\!\tau^{-\frac{3}{2}}\!
\sum_{i=0}^{m\!+\!1}\!\left(1+\|\tau^{\frac{i}{2}}\nabla^{i}
d\|_{X}\right)\!\left(\! \|\tau^{\frac{i}{2}}\nabla^{i}
u\|_{Z}^{2}+\|\tau^{\frac{i}{2}}\nabla^{i}d\|_{X}^{2}\right)\!\text{d}y\text{d}\tau\right\|_{\!L^{\!\infty}}\!\!\text{d}t\nonumber\\
\leq & C_{m}2^{q}\int_{2^{-2q}}^{\infty}\int_{\frac{t}{2}}^{t}
e^{-c(t-\tau)2^{2q}}\tau^{-\frac{3}{2}}\text{d}\tau\text{d}t\sum_{i=0}^{m+1}\!\left(1+\|\tau^{\frac{i}{2}}\nabla^{i}
d\|_{X}\right)\left( \|\tau^{\frac{i}{2}}\nabla^{i}
u\|_{Z}^{2}+\|\tau^{\frac{i}{2}}\nabla^{i}d\|_{X}^{2}\right)
\nonumber\\
\leq & C_{m}\sum_{i=0}^{m+1}\!\left(1+\|\tau^{\frac{i}{2}}\nabla^{i}
d\|_{X}\right)\left( \|\tau^{\frac{i}{2}}\nabla^{i}
u\|_{Z}^{2}+\|\tau^{\frac{i}{2}}\nabla^{i}d\|_{X}^{2}\right)\leq
C_{m}\varepsilon^{2}(1+\varepsilon).
\end{align*}
Therefore, by selecting $\varepsilon$ small enough, we obtain
\begin{align*}
2^{q}\int_{2^{-2q}}^{\infty}\|t^{\frac{m}{2}}\nabla^{m+1}\Delta_{q}
\mathbb{T}_{2}(u,d)(\cdot,t)\|_{L^{\infty}}\text{d}t\leq
C_{m}\varepsilon(1+\varepsilon(1+\varepsilon))\leq C_{m}\varepsilon,
\end{align*}
which along with  \eqref{eq3.16} ensure that
\begin{align*}
\|t^{\frac{m}{2}}\nabla^{m+1}\Delta_{q}
\mathbb{T}_{2}(u,d)\|_{\widetilde{L}^{1}(\mathbb{R}_{+};\dot{B}^{1}_{\infty,\infty})}\leq
C_{m}\varepsilon.
\end{align*}
 Combining the inequality above and \eqref{eq3.15} together, we
complete the proof of \eqref{eq3.14}. \medskip
\\
\textbf{Step 2. }  Estimate \eqref{eq1.6} for all  $k,m\geq 0$. 
\medskip

Subsequently, we shall prove \eqref{eq1.6} for all $k,m\geq 0$ by
the standard induction method.  Let $k>0$ be a fixed positive
integer, by using the results of \textbf{Step 1}, we may assume that
\eqref{eq1.6} holds for all $0\leq \ell\leq k-1$ and for all $m\geq
0$, i.e., there hold
\begin{align}\label{eq3.17}
\|t^{\frac{m}{2}+\ell}\left(\partial_{t}^{\ell}\nabla^{m}
u,\partial_{t}^{\ell}\nabla^{m+1}
d\right)\|_{\widetilde{L}^{\infty}(\mathbb{R}_{+};\dot{B}^{-1}_{\infty,\infty})\cap
\widetilde{L}^{1}(\mathbb{R}_{+};\dot{B}^{1}_{\infty,\infty})}\leq
C_{m,k}\varepsilon \quad\forall m\geq 0\text{ and } \ell=1,2,\cdots,
k-1.
\end{align}
In what follows, we shall have completed the proof of Theorem
\ref{thm1.3} if we prove that \eqref{eq3.17} still holds for $k$.
Before going to do it, we notice that by using \eqref{eq3.17} and
the fact that the operator $\nabla^{-1}$ is bounded from
$\dot{B}^{-s-1}_{\infty,\infty}$ to $\dot{B}^{-s}_{\infty,\infty}$
for all $s\geq 0$,  we have
\begin{align}\label{eq3.18}
\|t^{\frac{m}{2}+\ell}\left(\partial_{t}^{\ell}\nabla^{m+1}
u,\partial_{t}^{\ell}\nabla^{m+1}\nabla
d\right)\|_{\widetilde{L}^{\infty}(\mathbb{R}_{+};\dot{B}^{0}_{\infty,\infty})}\leq&
\|t^{\frac{m}{2}+\ell}\left(\partial_{t}^{\ell}\nabla^{m}
u,\partial_{t}^{\ell}\nabla^{m}\nabla
d\right)\|_{\widetilde{L}^{\infty}(\mathbb{R}_{+};\dot{B}^{-1}_{\infty,\infty})}\nonumber\\
\leq& C_{m,k}\varepsilon \quad\forall m\geq 0 \text{ and }
\ell=1,2,\cdots, k-1.
\end{align}
By using the standard interpolation thoery, it follows from
\eqref{eq3.17} that
\begin{align}\label{eq3.19}
\|\left(t^{\frac{m}{2}+\ell}\partial_{t}^{\ell}\nabla^{m}
u,\partial_{t}^{\ell}\nabla^{m}\nabla
d\right)\|_{\widetilde{L}^{2}(\mathbb{R}_{+};\dot{B}^{0}_{\infty,\infty})}\leq
C_{m,k}\varepsilon\quad\forall m\geq 0\text{ and } \ell=1,2,\cdots,
k-1.
\end{align}
We also notice that when we study the linear nonhomogeneous
fractional heat equation
\begin{align*}
\partial_{t}\Psi-\Delta\Psi= F.
\end{align*}
For any $k\geq 1$, by induction we have
\begin{align}\label{eq3.20}
\partial_{t}^{k}\Psi = \Delta^{k} \Psi+\sum_{i=0}^{k-1}
\Delta^{k-1-i}\partial_{t}^{i} F.
\end{align}
 On the other hand, we can
rewrite \eqref{eq1.1} as
\begin{align*}
\partial_{t} u -\Delta u=-\mathbb{P}\nabla\cdot(u\otimes
u+\nabla d\odot \nabla d).
\end{align*}
Hence, by using the inequality \eqref{eq3.20}, and by writing $Y:=
\widetilde{L}^{\infty}(\mathbb{R}_{+};\dot{B}^{-1}_{\infty,\infty})\cap
\widetilde{L}^{1}(\mathbb{R}_{+};\dot{B}^{1}_{\infty,\infty})$, we
have
\begin{align}\label{eq3.21}
\|t^{\frac{m}{2}+k}\partial_{t}^{k}\nabla^{m} u\|_{Y}\leq&
\|t^{\frac{m}{2}+k} \Delta^{k}\nabla^{m}
u\|_{Y}+\sum_{i=0}^{k-1}\|t^{\frac{m}{2}+k}\Delta^{k-1-i}\partial_{t}^{i}\nabla^{m}
(\mathbb{P}\nabla\cdot(u\otimes u)+(\nabla d\odot\nabla d))\|_{Y}\nonumber\\
\leq& \|t^{\frac{m}{2}+k} \nabla^{2 k+m}
u\|_{Y}+\sum_{i=0}^{k-1}\|t^{\frac{m}{2}+k}\partial_{t}^{i}\nabla^{2(k-1-i)+m+1}
(u\otimes u+\nabla d\odot \nabla d)\|_{Y}\nonumber\\
 :=& II_{1}+II_{2}.
\end{align}
Repeating the progress as in derive \eqref{eq3.17} and
\eqref{eq3.20}, it is easy to see
\begin{align*}
 II_{1}\leq C_{m,k}\varepsilon(1+\varepsilon)
\end{align*}
For the rest term $II_{2}$, by using Lemma \ref{lem2.4}, the
inequalities \eqref{eq3.18} and \eqref{eq3.19}, we have
\begin{align*}
II_{2}\leq&\sum_{i=0}^{k-1}\|t^{\frac{m}{2}+k}\partial_{t}^{i}\nabla^{2(k-1-i)+m+1}
(u\otimes u+\nabla d\odot\nabla d)\|_{\widetilde{L}^{2}(\mathbb{R}_{+};\dot{B}^{0}_{\infty,\infty})}\nonumber\\
\leq&
\sum_{i=0}^{k-1}\sum_{\ell=0}^{2(k-1-i)+m+1}\sum_{\ell_{1}=0}^{\ell}\Big(\left\|
|t^{\frac{\ell}{2}+\ell_{1}}\partial_{t}^{\ell_{1}}\nabla^{\ell}
u|\cdot|t^{\frac{m-\ell}{2}+k-\ell_{1}}\partial_{t}^{i-\ell_{1}}\nabla^{2(k-1-i)+m+1-\ell}
u|\right\|_{\widetilde{L}^{2}(\mathbb{R}_{+};\dot{B}^{0}_{\infty,\infty})}\nonumber\\
&+\left\|
|t^{\frac{\ell}{2}+\ell_{1}}\partial_{t}^{\ell_{1}}\nabla^{\ell}
\nabla
d|\cdot|t^{\frac{m-\ell}{2}+k-\ell_{1}}\partial_{t}^{i-\ell_{1}}\nabla^{2(k-1-i)+m+1-\ell}
\nabla d|\right\|_{\widetilde{L}^{2}(\mathbb{R}_{+};\dot{B}^{0}_{\infty,\infty})}\Big)\nonumber\\
 \leq&
C\sum_{i=0}^{k-1}\sum_{\ell=0}^{2(k-1-i)+m}\sum_{\ell_{1}=0}^{\ell}\left\|
t^{\frac{\ell}{2}+\ell_{1}}\left(\partial_{t}^{\ell_{1}}\nabla^{\ell}
u,\partial_{t}^{\ell_{1}}\nabla^{\ell}
\nabla d\right)\right\|_{\widetilde{L}^{2}(\mathbb{R}_{+};\dot{B}^{0}_{\infty,\infty})}\nonumber\\
\quad &\times
\left\|t^{\frac{m-\ell}{2}+k-\ell_{1}}\left(\partial_{t}^{i-\ell_{1}}\nabla^{2(k-1-i)+m-\ell+1}
u,\partial_{t}^{i-\ell_{1}}\nabla^{2(k-1-i)+m-\ell+1} \nabla
d\right)\right\|_{\widetilde{L}^{\infty}(\mathbb{R}_{+};\dot{B}^{0}_{\infty,\infty})}\nonumber\\
\leq& C_{m,k}\varepsilon^{2}.
\end{align*}
Inserting the estimates of $II_{1}$ and $II_{2}$ into
\eqref{eq3.21}, it follows that
\begin{align*}
\|t^{\frac{m}{2}+k}\partial^{k}_{t}\nabla^{m}
u\|_{\widetilde{L}^{\infty}(\mathbb{R}_{+};\dot{B}^{-1}_{\infty,\infty})\cap
\widetilde{L}^{1}(\mathbb{R}_{+};\dot{B}^{1}_{\infty,\infty})}\leq
C_{m}\varepsilon,
\end{align*}
if we choose $\varepsilon$ small enough.

By rewriting \eqref{eq1.2} as
\begin{align*}
\partial_{t}(\nabla d)-\Delta\nabla d= -\nabla (u\cdot\nabla d+|\nabla
d|^{2}d),
\end{align*}
and by repeating a similar process of the derivations of the
velocity field $u$, we can handle the case for $d$, i.e., there
holds
\begin{align*}
\|t^{\frac{m}{2}+k}\partial^{k}_{t}\nabla^{m} \nabla
d\|_{\widetilde{L}^{\infty}(\mathbb{R}_{+};\dot{B}^{-1}_{\infty,\infty})\cap
\widetilde{L}^{1}(\mathbb{R}_{+};\dot{B}^{1}_{\infty,\infty})}\leq
C_{m}\varepsilon,
\end{align*}
if we choose $\varepsilon$ small enough. Therefore, we conclude that
\eqref{eq1.6} is still holds for $k$. This completes the proof of
Theorem \ref{thm1.3}.
 \hfill$\Box$

\begin{remark}
It is standard that the condition \eqref{eq1.4} is preserved by the
flow. In fact, by applying the maximum principle to the equations of
$|d|^{2}$, we can easily see that $|d|=1$ under the initial
assumption the $|d_{0}|=1$. We omitted this step due to it is
standard.
\end{remark}

\section{Proof of Theorem \ref{thm1.5}}\label{Proof2}

In this section, following the methods used by Chemin in
\cite{C1,C2}(see also \cite{ZZ}), we give the proof of Theorem
\ref{thm1.5}. We first notice that the proof of the existence of
$\gamma(x,t)$ is exactly as the couterpart in Theorem 3.4 of
\cite{C1} (or Theorem 3.2.1 of \cite{C2}), and we omit the details
here. In what follows, the main issue if to prove \eqref{eq1.7}.
Indeed, for any $x_{1},x_{2}\in\mathbb{R}^{n}$ and $q\in\mathbb{Z}$,
let us decompose $(u,\nabla d)$ in a low and a high frequency part.
This leads to, for all $t\in \mathbb{R}_{+}$, we have
\begin{align*}
&|\gamma(x_{1},t)-\gamma(x_{2},t)|\nonumber\\
\leq&
|x_{1}-x_{2}|+\int_{0}^{t}|
 \left(S_{q}
u(\gamma(x_{1},s),s)-S_{q}u(\gamma(x_{2},s),s), S_{q}
\nabla d(\gamma(x_{1},s),s)-S_{q}\nabla(\gamma(x_{2},s),s)\right)|\text{d}s\nonumber\\
&+2\int_{0}^{t} \sum_{p\geq q}
\|\left(\Delta_{p}u(\cdot,s),\Delta_{p}\nabla d(\cdot,s)\right)\|_{L^{\infty}}\text{d}s\nonumber\\
\leq& |x_{1}-x_{2}|+\int_{0}^{t} \|\left(\nabla S_{q}
u(\cdot,s),\nabla S_{q}\nabla d(\cdot,s)\right)\|_{L^{\infty}}|\gamma(x_{1},s)-\gamma(x_{2},s)|\text{d}s\nonumber\\
&+2\sum_{p\geq q} 2^{-p}\cdot\sup_{p\geq q}\left\{\int_{0}^{t} 2^{p}
\|\left(\Delta_{p}u(\cdot,s),\Delta_{p}\nabla d(\cdot,s)\right)\|_{L^{\infty}}\text{d}s\right\}\nonumber\\
\leq& |x_{1}-x_{2}|+\int_{0}^{t} \|\left(\nabla S_{q}
u(\cdot,s),\nabla S_{q}
\nabla d(\cdot,s)\right)\|_{L^{\infty}}|\gamma(x_{1},s)-\gamma(x_{2},s)|\text{d}s\nonumber\\
&+2^{2-q} \|(u,\nabla
d)\|_{\widetilde{L}^{\infty}(\mathbb{R}_{+};\dot{B}^{1}_{\infty,\infty})}.
\end{align*}
Let $\rho(t):\triangleq |\gamma(x_{1},t)-\gamma(x_{2},t)|$ and
\begin{align*}
D_{q}:\triangleq |x_{1}-x_{2}|+2^{2-q} \|(u,\nabla
d)\|_{\widetilde{L}^{\infty}(\mathbb{R}_{+};\dot{B}^{1}_{\infty,\infty})}\!+\!\int_{0}^{t}\!
\|\left(\nabla S_{q} u(\cdot,s),\nabla S_{q}\nabla
d(\cdot,s)\right)\|_{L^{\infty}}|\gamma(x_{1},s)-\gamma(x_{2},s)|\text{d}s.
\end{align*}
Then we have
\begin{align*}
\rho(t) \leq D_{q}(t)\quad\text{ for  all  }q\in\mathbb{Z},
\end{align*}
and
\begin{align*}
D_{q}(t)\leq  |x_{1}-x_{2}|+2^{2-q} \|(u,\nabla
d)\|_{\widetilde{L}^{\infty}(\mathbb{R}_{+};\dot{B}^{1}_{\infty,\infty})}+\int_{0}^{t}
\|\left(\nabla S_{q} u(\cdot,s),\nabla S_{q}\nabla
d(\cdot,s)\right)\|_{L^{\infty}}D_{q}(s)\text{d}s.
\end{align*}
The Gronwall inequality implies that, for any $t> 0$,
\begin{align}\label{eq4.1}
D_{q}(t)\leq&  ( |x_{1}-x_{2}|+2^{2-q} \|(u,\nabla
d)\|_{\widetilde{L}^{\infty}(\mathbb{R}_{+};\dot{B}^{1}_{\infty,\infty})})\exp\left\{\int_{0}^{t}
\|\left(\nabla S_{q} u(\cdot,s),\nabla S_{q}\nabla
d(\cdot,s)\right)\|_{L^{\infty}}\text{d}s\right\}
\nonumber\\
\leq &  ( |x_{1}-x_{2}|+C 2^{2-q}
\varepsilon)\exp\left\{\int_{0}^{t} \|\nabla S_{q}
u(\cdot,s)\|_{L^{\infty}}\text{d}s\right\},
\end{align}
where we have used \eqref{eq1.6} with $k=m=0$ in the last inequality
above. Notice that the above inequality holds for all
$q\in\mathbb{Z}$, and by selecting $q\geq 1$, we have
\begin{align*}
&\int_{0}^{t} \|\left(\nabla S_{q} u(\cdot,s),\nabla S_{q}\nabla
d(\cdot,s)\right)\|_{L^{\infty}}\text{d}s\nonumber\\
\leq& \int_{0}^{t} \sum_{p\leq 0} 2^{p} \|\left(\Delta_{p}
u(\cdot,s),\Delta_{p}\nabla
d(\cdot,s)\right)\|_{L^{\infty}}\text{d}s+\sum_{p=0}^{q}
\int_{0}^{t}
2^{p}\|(u(\cdot,s),\nabla d(\cdot,s))\|_{L^{\infty}}\text{d}s\nonumber\\
\leq& \int_{0}^{t} \sum_{p\leq 0} 2^{2p} \cdot 2^{-p}
\|\left(\Delta_{p} u(\cdot,s),\Delta_{p}\nabla
d(\cdot,s)\right)\|_{L^{\infty}}\text{d}s+q
\|(u,\nabla d)\|_{\widetilde{L}^{1}_{t}(\dot{B}^{1}_{\infty,\infty})}\nonumber\\
\leq& C t\|(u,\nabla
d)\|_{\widetilde{L}^{\infty}(\mathbb{R}_{+};\dot{B}^{-1}_{\infty,\infty})}+q
\|(u,\nabla d)\|_{\widetilde{L}^{1}(\mathbb{R}_{+}\dot{B}^{1}_{\infty,\infty})}\nonumber\\
\leq &C(t+q)\|(u,\nabla
d)\|_{\widetilde{L}^{\infty}(\mathbb{R}_{+};\dot{B}^{-1}_{\infty,\infty})\cap
\widetilde{L}^{1}(\mathbb{R}_{+};\dot{B}^{1}_{\infty,\infty})}\leq C
\varepsilon (t+q).
\end{align*}
The above inequality along with \eqref{eq4.1} implies that
\begin{align*}
D_{q}(t)\leq ( |x_{1}-x_{2}|+C 2^{2-q} \varepsilon)\exp\left\{ C
\varepsilon (t+q)\right\}.
\end{align*}
By choosing $2^{q}\equiv |x_{1}-x_{2}|^{-1}$ in the above
inequality, we infer that \eqref{eq1.7} holds, and the Theorem
\ref{thm1.5} is proved.
 \hfill$\Box$
\\

\end{document}